\newtheorem{thm}{Theorem}[section]
\newtheorem{prop}[thm]{Proposition}
\newtheorem{lemma}[thm]{Lemma}
\newtheorem{cor}[thm]{Corollary}
\newtheorem{remark}[thm]{Remark}
\def\bN{\mathbb{N}}
\def\bM{\mathbb{M}}
\def\cH{\mathcal{H}}
\def\bR{\mathbb{R}}
\def\bC{\mathbb{C}}
\def\eps{\varepsilon}
\def\<{\langle}
\def\>{\rangle}
\begin{document}
\baselineskip=16pt

\ \vskip 1cm
\centerline{\LARGE Loewner matrices of}
\bigskip
\centerline{\LARGE matrix convex and monotone functions}

\bigskip
\bigskip
\centerline{\large
Fumio Hiai\footnote{E-mail: hiai@math.is.tohoku.ac.jp}
and Takashi Sano\footnote{E-mail: sano@sci.kj.yamagata-u.ac.jp}}
	
\medskip
\begin{center}
$^{1}$\,Graduate School of Information Sciences, Tohoku University, \\
Aoba-ku, Sendai 980-8579, Japan
\end{center}
\begin{center}
$^{2}$\,Department of Mathematical Sciences, Faculty of Science, \\
Yamagata University, Yamagata 990-8560, Japan
\end{center}

\medskip
\begin{abstract}
The matrix convexity and the matrix monotony of a real $C^1$ function $f$ on $(0,\infty)$
are characterized in terms of the conditional negative or positive definiteness of the
Loewner matrices associated with $f$, $tf(t)$, and $t^2f(t)$. Similar characterizations
are also obtained for matrix monotone functions on a finite interval $(a,b)$.

\bigskip\noindent
{\it AMS classification:}
15A45, 47A63, 42A82

\medskip\noindent
{\it Keywords:}
$n$-convex, $n$-monotone, operator monotone, operator convex, Loewner matrix,
conditional negative definite, conditional positive definite

\end{abstract}

\section*{Introduction}

In matrix/operator analysis quite important are the notions of matrix/operator monotone
and convex functions initiated in 1930's by L\"owner \cite{Lo} and Kraus \cite{Kr}. For a
real $C^1$ function on an interval $(a,b)$ it was proved in \cite{Lo} that $f$ is matrix
monotone of order $n$ (i.e., $A\le B$ implies $f(A)\le f(B)$ for $n\times n$ Hermitian
matrices $A,B$ with eigenvalues in $(a,b)$) if and only if the matrix
$$
L_f(t_1,\dots,t_n):=\biggl[{f(t_i)-f(t_j)\over t_i-t_j}\biggr]_{i,j=1}^n
$$
of divided differences of $f$ is positive semidefinite for any choice of $t_1,\dots,t_n$
from $(a,b)$. The above matrix $L_f(t_1,\dots,t_n)$ is called the Pick
matrix or else the Loewner (= L\"owner) matrix associated with $f$. The characterization
of matrix convex functions of similar kind was obtained in \cite{Kr} in terms of divided
differences of the second order. Almost a half century later in 1982 a modern treatment of
operator (but not matrix) convex functions was developed by Hansen and Pedersen \cite{HP}.
The most readable exposition on the subject is found in \cite{Bh}.

Recently in \cite{BS} Bhatia and the second-named author of this paper presented new
characterizations for operator convexity of nonnegative functions on $[0,\infty)$ in
terms of the conditional negative or positive definiteness (whose definitions are in
Section 1) of the Loewner matrices. More precisely, the main results in \cite{BS} are
stated as follows: A nonnegative $C^2$ function $f$ on $[0,\infty)$ with $f(0)=f'(0)=0$ is
operator convex if and only if $L_f(t_1,\dots,t_n)$ is conditionally negative definite for
all $t_1,\dots,t_n>0$ of any size $n$. Moreover, if $f$ is a nonnegative $C^3$ function on
$[0,\infty)$ with $f(0)=f'(0)=f''(0)=0$, then $f(t)/t$ is operator convex if and only if
$L_f(t_1,\dots,t_n)$ is conditionally positive definite for all $t_1,\dots,t_n>0$ of any
size $n$. More recently, Uchiyama \cite{Uc} extended, by a rather different method, the
first result stated above in such a way that the assumption $f\ge0$ is removed and the
boundary condition $f(0)=f'(0)=0$ is relaxed. Here it should be noted that the conditional
positive definiteness of the Loewner matrices and the matrix/operator monotony was related
in \cite{Ho} and \cite[Chapter XV]{Do} for a real function on a general open interval
(see Remark \ref{R-2.8} for more details).

In the present paper we consider the following conditions for a $C^1$ function $f$ on
$(0,\infty)$ and for each integer $n\ge1$:
\begin{itemize}
\item[\rm(a)$_n$] $f$ is matrix convex of order $n$ on $(0,\infty)$;
\item[\rm(b)$_n$] $\liminf_{t\to\infty}f(t)/t>-\infty$ and $L_f(t_1,\dots,t_n)$ is
conditionally negative definite for all $t_1,\dots,t_n\in(0,\infty)$;
\item[\rm(c)$_n$] $\limsup_{t\searrow0}tf(t)\ge0$ and $L_{tf(t)}(t_1,\dots,t_n)$ is
conditionally positive definite for all $t_1,\dots,t_n\in(0,\infty)$.
\end{itemize}
We improve the proof in \cite{BS} without use of integral representation of operator convex
functions and prove the implications (a)$_{2n+1}$ $\Rightarrow$ (b)$_n$, (b)$_{4n+1}$
$\Rightarrow$ (a)$_n$, (a)$_{n+1}$ $\Rightarrow$ (c)$_n$, and (c)$_{2n+1}$ $\Rightarrow$
(a)$_n$. In this way, the results in \cite{BS} (also \cite{Uc}) are refined to those in
the matrix level.

The paper is organized as follows. In Section 1 we prepare several implications among a
number of conditions related to matrix monotone and convex functions, providing technical
part of the proofs of our theorems. Some essential part of those implications are from
\cite{OT}. In Section 2 we prove the above stated theorem (Theorem \ref{T-2.1})
characterizing matrix convex functions on $(0,\infty)$ in terms of the conditional
negative or positive definiteness of the Loewner matrices. Similar characterizations of
matrix monotone functions on $(0,\infty)$ are also obtained (Theorem \ref{T-2.6}). In
Section 3 our theorems are exemplified with the power functions $t^\alpha$ on $(0,\infty)$.
(An elementary treatment of the conditional positive and negative definiteness of the
Loewner matrices for those functions is found in \cite{BS2}.) Finally in Section 4, we
further obtain similar characterizations of matrix monotone functions on a finite interval
$(a,b)$ by utilizing an operator monotone bijection between $(a,b)$ and $(0,\infty)$.

\section{Definitions and lemmas}
\setcounter{equation}{0}

For $n\in\bN$ let $\bM_n$ denote the set of all $n\times n$ complex matrices. Let $f$ be a
continuous real function on an interval $J$ of the real line. It is said that $f$ is
{\it matrix monotone of order $n$}\index{matrix monotone of order $n$} ({\it $n$-monotone} for short) on $J$ if
\begin{equation}\label{F-1.1}
A\ge B\quad\mbox{implies}\quad f(A)\ge f(B)
\end{equation}
for Hermitian matrices $A,B$ in $\bM_n$ with $\sigma(A),\sigma(B)\subset J$, where
$\sigma(A)$ stands for the spectrum (the eigenvalues) of $A$. It is said that $f$ is
{\it matrix convex of order $n$} ({\it $n$-convex} for short) on $J$ if
\begin{equation}\label{F-1.2}
f(\lambda A+(1-\lambda)B)\le\lambda f(A)+(1-\lambda)f(B)
\end{equation}
for all Hermitian $A,B\in\bM_n$ with $\sigma(A),\sigma(B)\subset J$ and for all
$\lambda\in(0,1)$. Also, $f$ is said to be {\it $n$-concave} on $J$ if $-f$ is $n$-convex
on $J$. Furthermore, it is said that $f$ is {\it operator monotone} on $J$ if
\eqref{F-1.1} holds for self-adjoint operators $A,B$ in $B(\cH)$ with
$\sigma(A),\sigma(B)\subset J$, and {\it operator convex} on $J$ if \eqref{F-1.2} holds
for all self-adjoint $A,B\in B(\cH)$ with $\sigma(A),\sigma(B)\subset J$ and for all
$\lambda\in(0,1)$, where $B(\cH)$ is the set of all bounded operators on an
infinite-dimensional (separable) Hilbert space $\cH$. As is well known, $f$ is operator
monotone (resp., operator convex) on $J$ if and only if it is $n$-monotone (resp.,
$n$-convex) on $J$ for all $n\in\bN$.

For each $n\in\bN$ let $\bC_0^n$ denote the subspace of $\bC^n$ consisting of all
$x=(x_1,\dots,x_n)^t\in\bC^n$ such that $\sum_{i=1}^nx_i=0$. A Hermitian matrix $A$ in
$\bM_n$ is said to be {\it conditionally positive definite} ({\it c.p.d.}\ for short) if
$\<x,Ax\>\ge0$ for all $x\in\bC_0^n$, and {\it conditionally negative definite}
({\it c.n.d.}\ for short) if $-A$ is c.p.d. Let $f$ be a real $C^1$ (i.e., continuously
differentiable) function $f$ on an interval $(a,b)$ with $-\infty\le a<b\le\infty$. The
{\it divided difference} of $f$ is defined by
$$
f^{[1]}(s,t):=\begin{cases}
{f(s)-f(t)\over s-t} & \text{if $s\ne t$}, \\
f'(s) & \text{if $s=t$},
\end{cases}
$$
which is a continuous function on $(a,b)^2$ (see \cite[Chapter I]{Do} for details on
divided differences). For each $t_1,\dots,t_n\in(a,b)$, the {\it Loewner matrix}
$L_f(t_1,\dots,t_n)$ associated with $f$ (for $t_1,\dots,t_n$) is defined to be the
$n\times n$ matrix whose $(i,j)$-entry is $f^{[1]}(t_i,t_j)$, i.e.,
$$
L_f(t_1,\dots,t_n):=\bigl[f^{[1]}(t_i,t_j)\bigr]_{i,j=1}^n.
$$
In the fundamental paper \cite{Lo}, Karl L\"owner (later Charles Loewner) proved that,
for a real $C^1$ function $f$ on $(a,b)$ and for each $n\in\bN$, $f$ is $n$-monotone on
$(a,b)$ if and only if $L_f(t_1,\dots,t_n)$ is positive semidefinite for any choice of
$t_1,\dots,t_n$ from $(a,b)$.

Let $f$ be a continuous real function on $[0,\infty)$. For each $n\in\bN$ we consider the
following conditions:
\begin{itemize}
\item[(i)$_n$] $f$ is $n$-monotone on $[0,\infty)$;
\item[(ii)$_n$] $f$ is $n$-concave on $[0,\infty)$;
\item[(iii)$_n$] $f$ is $n$-convex on $[0,\infty)$ and $f(0)\le0$;
\item[(iv)$_n$] $f(X^*AX)\le X^*f(A)X$ for all $A,X\in\bM_n$ with $A\ge0$ and $\|X\|\le1$;
\item[(v)$_n$] $f(t)/t$ is $n$-monotone on $(0,\infty)$.
\end{itemize}

When $f$ is $C^1$ on $(0,\infty)$, we further consider the following conditions:
\begin{itemize}
\item[(vi)$_n$] $L_f(t_1,\dots,t_n)$ is c.n.d.\ for all $t_1,\dots,t_n\in(0,\infty)$;
\item[(vii)$_n$] $L_{tf(t)}(t_1,\dots,t_n)$ is c.p.d.\ for all
$t_1,\dots,t_n\in(0,\infty)$.
\end{itemize}

For a continuous real function $f$ on $[0,\infty)$ such that $f(t)>0$ for all $t>0$, the
following conditions are also considered:
\begin{itemize}
\item[(viii)$_n$] $t/f(t)$ is $n$-monotone on $(0,\infty)$;
\item[(ix)$_n$] $t^2/f(t)$ is $n$-monotone on $(0,\infty)$.
\end{itemize}

In the rest of this section we present lemmas on several relations among the above
conditions, which will be used in the next section. But they may be of some independent
interest.

\begin{lemma}\label{L-1.1}
Let $f$ be a continuous real function on $[0,\infty)$. Then for every $n\in\bN$ the
following implications hold:
$$
{\rm(iii)}_{n+1}\Longrightarrow{\rm(iv)}_n\Longleftrightarrow{\rm(v)}_n,\qquad
{\rm(v)}_{2n}\Longrightarrow{\rm(iii)}_n.
$$
\end{lemma}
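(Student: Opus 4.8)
The plan is to prove the four asserted implications by exploiting the known characterizations of $n$-monotonicity and $n$-convexity together with the standard ``dilation/compression'' machinery for matrix convex functions. Throughout, the key tool is the classical fact (due to Hansen–Pedersen type arguments) that condition (iv)$_n$ — namely $f(X^*AX)\le X^*f(A)X$ for all $A\ge0$ and all contractions $X$ in $\bM_n$ — is the ``right'' reformulation of operator-convexity-with-$f(0)\le0$ at the matrix level.

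\smallskip
\emph{Step 1: (iii)$_{n+1}\Rightarrow$(iv)$_n$.} Given $A\ge0$ in $\bM_n$ and a contraction $X\in\bM_n$, I would build the $2n\times2n$ unitary dilation $U=\begin{bmatrix}X& (I-XX^*)^{1/2}\\ (I-X^*X)^{1/2}& -X^*\end{bmatrix}$ and, with $\widetilde A=A\oplus0\in\bM_{2n}$, use the averaging identity $U^*\widetilde AU\,\big|_{\text{upper-left }n\times n}=X^*AX$ (and similarly with $-U$) to write $X^*AX$ as a compression of $\tfrac12\bigl(U^*\widetilde AU+(-U)^*\widetilde A(-U)\bigr)$-type average. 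Matrix convexity of order $2n$ (which follows from (iii)$_{n+1}$ via the standard block-trick that $n{+}1$-convexity on $[0,\infty)$ with $f(0)\le 0$ upgrades to the compression inequality — here one uses the extra ``$+1$'' to accommodate a rank-one perturbation encoding $f(0)\le0$) then yields $f(X^*AX)\le X^*f(A)X$. I expect the bookkeeping of which order of matrix convexity is actually consumed to be slightly delicate, but it will comfortably fit within $n+1$.

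\smallskip
\emph{Step 2: (iv)$_n\Leftrightarrow$(v)$_n$.} For the nontrivial direction, apply (iv)$_n$ with $A$ replaced by an invertible positive $A$ and $X=A^{-1/2}Y$ where $Y$ ranges suitably; more cleanly, use that $t\mapsto f(t)/t$ is $n$-monotone iff $f$ satisfies the compression inequality, which is exactly the content of the equivalence ``$g$ is $n$-monotone $\iff$ $tg(t)$ satisfies (iv)$_n$'' — one substitutes $g(t)=f(t)/t$, $f(t)=tg(t)$, and checks that the Loewner/monotonicity condition for $g$ translates into (iv)$_n$ for $f$ by the change of variables $A\mapsto A$, $X\mapsto$ contraction, using that monotonicity of $g$ is equivalent to $A\le B\Rightarrow g(A)\le g(B)$, rewritten via $g(A)=A^{-1/2}f(A)A^{-1/2}$ on the positive cone. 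The converse (iv)$_n\Rightarrow$(v)$_n$ runs backwards through the same identities. I would quote that this equivalence is essentially in \cite{OT}, verifying the bookkeeping keeps the order at $n$.

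\smallskip
\emph{Step 3: (v)$_{2n}\Rightarrow$(iii)$_n$.} Here I start from $n$-monotonicity of $g(t)=f(t)/t$ of order $2n$ and want $n$-convexity of $f=tg$ together with $f(0)\le0$. The boundary condition: since $g$ is $n$-monotone on $(0,\infty)$ with $n\ge1$ it is nondecreasing, so $\lim_{t\searrow0}g(t)$ exists in $[-\infty,\infty)$; continuity of $f$ at $0$ forces $f(0)=0\cdot(\text{that limit})\le 0$ once one rules out the $+\infty$ case, which $n$-monotonicity does (a monotone function bounded below near $0$... actually $g$ nondecreasing means it is bounded above near $0$, hence $f(0)=\lim tg(t)=0\le0$). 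For the convexity, I would use the standard equivalence that $f$ is $n$-convex on $[0,\infty)$ with $f(0)\le0$ iff (iv)$_n$ holds, combined with the doubling $\bM_n\hookrightarrow\bM_{2n}$ needed to turn the monotonicity of $g$ on arbitrary $2n\times2n$ positive pairs into the compression inequality for $f$ on $n\times n$ data; the factor $2$ enters precisely because realizing a compression $X^*AX$ (for $n\times n$ data) as a corner of a conjugation by an isometry requires passing to dimension $2n$.

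\smallskip
\emph{Main obstacle.} The genuinely technical point is Step 3 (and the order-counting in Step 1): one must carefully track how a size-$m$ hypothesis on one of the functions $f$, $f/t$, $tf$ produces a size-$\lfloor m/2\rfloor$ or size-$(m-1)$ conclusion on another, since the dilations used (isometry from $\bC^n$ into $\bC^{2n}$, rank-one correction for $f(0)\le0$) each cost either a doubling or a $+1$ in the matrix size. I would organize these as explicit block-matrix computations and state the dimension bookkeeping as a short sublemma before assembling the four implications.
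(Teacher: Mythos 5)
The decisive problem is your Step 1. The unitary-dilation/averaging argument you sketch lives in $\bM_{2n}$: with $\widetilde A=A\oplus0\in\bM_{2n}$ and the $2n\times2n$ unitaries built from $X$, the inequality $f\bigl(\tfrac12(U^*\widetilde AU+V^*\widetilde AV)\bigr)\le\tfrac12\bigl(U^*f(\widetilde A)U+V^*f(\widetilde A)V\bigr)$ requires $f$ to be matrix convex of order $2n$, so this route proves only ${\rm(iii)}_{2n}\Rightarrow{\rm(iv)}_n$. Your claim that $(n+1)$-convexity ``upgrades'' to what is needed via a rank-one perturbation, so that the bookkeeping ``comfortably fits within $n+1$,'' is exactly the point that is not true for this argument: the dilation costs a doubling, not a $+1$, and for $n\ge2$ one has $2n>n+1$. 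The sharp implication ${\rm(iii)}_{n+1}\Rightarrow{\rm(v)}_n$ (hence ${\rm(iv)}_n$) is precisely Theorem 2.2 of \cite{OT}, proved there by a genuinely different and more delicate argument; the paper does not reprove it but cites it, and the improved order counting is what makes the implications ${\rm(a)}_{2n+1}\Rightarrow{\rm(b)}_n$ and ${\rm(a)}_{n+1}\Rightarrow{\rm(c)}_n$ of Theorem \ref{T-2.1} possible. As written, your proposal establishes a strictly weaker statement than the lemma.

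Secondarily, your Step 2 does not actually contain an argument for the nontrivial direction ${\rm(v)}_n\Rightarrow{\rm(iv)}_n$: the phrase ``$f(t)/t$ is $n$-monotone iff $f$ satisfies the compression inequality'' restates the equivalence to be proved, and the proposed ``change of variables'' is not a mere substitution. The paper's proof is short but concrete: assuming $A>0$ and $X$ invertible, set $B:=A^{1/2}XX^*A^{1/2}\le A$, take the polar decomposition $A^{1/2}X=U|A^{1/2}X|$ so that $B^{1/2}=A^{1/2}XU^*$, apply the monotonicity of $f(t)/t$ to $B\le A$ to get $f(B)\le B^{1/2}A^{-1/2}f(A)A^{-1/2}B^{1/2}=UX^*f(A)XU^*$, and use $f(B)=Uf(X^*AX)U^*$; non-invertible $X$ is handled by approximation. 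You would need to supply this (or an explicit citation to \cite[Theorem 2.1]{OT}, which the paper also gives). Your Step 3, by contrast, is essentially sound in outline — the boundary argument $f(0)=\lim_{t\searrow0}tg(t)\le0$ is correct, and the doubling $A\oplus B\in\bM_{2n}$ with the block contraction recovering $\lambda A+(1-\lambda)B$ as a corner is exactly how the factor $2$ in ${\rm(v)}_{2n}\Rightarrow{\rm(iii)}_n$ arises (this is the content of the proofs in \cite{HP} that the paper invokes), though the equivalence ``${\rm(iii)}_n\Leftrightarrow{\rm(iv)}_n$ at the same order'' that you appeal to in passing is not available and should be replaced by the one-sided implication ${\rm(iv)}_{2n}\Rightarrow{\rm(iii)}_n$.
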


\begin{proof}
(iii)$_{n+1}$ $\Rightarrow$ (v)$_n$ was shown in \cite[Theorem 2.2]{OT}, and (iv)$_n$
$\Leftrightarrow$ (v)$_n$ was in \cite[Theorem 2.1]{OT} while the following proof is
comparatively simpler. Indeed, (iv)$_n$ $\Rightarrow$ (v)$_n$ is seen from the proof of
\cite[Theorem 2.4]{HP}. Conversely, suppose (v)$_n$, and let $A\in\bM_n$ be positive
semidefinite and $X\in\bM_n$ with $\|X\|\le1$. We may assume that $A>0$, and we further
assume that $X$ is invertible. Take the polar decomposition $A^{1/2}X=U|A^{1/2}X|$ and set
$B:=|X^*A^{1/2}|^2$. Then we have $B\le A$ and $B^{1/2}=U|A^{1/2}X|U^*=A^{1/2}XU^*$, so
$A^{-1/2}B^{1/2}=XU^*$. Since $B^{-1/2}f(B)B^{-1/2}\le A^{-1/2}f(A)A^{-1/2}$, we have
$$
f(B)\le B^{1/2}A^{-1/2}f(A)A^{-1/2}B^{1/2}=UX^*f(A)XU^*
$$
and $f(B)=Uf(X^*AX)U^*$. Therefore, $f(X^*AX)\le X^*f(A)X$. When $X$ is not invertible,
choose a sequence $\eps_k\to0$ such that $X_k:=(1+|\eps_k|)^{-1}(X+\eps_kI)$ is invertible
for any $k$, and take the limit of $f(X_k^*AX_k)\le X_k^*f(A)X_k$. The remaining
(v)$_{2n}$ $\Rightarrow$ (iii)$_n$ is seen from the proof of
\cite[Theorems 2.1 and 2.4]{HP}.
\end{proof}

\begin{lemma}\label{L-1.2}
Let $f$ be a continuous real function on $[0,\infty)$. Then for every $n\in\bN$ the
implication
$$
{\rm(i)}_{2n}\Longrightarrow{\rm(ii)}_n
$$
holds. Moreover, if $f(t)>0$ for all $t>0$, then for every $n\in\bN$ the following hold:
$$
{\rm(ii)}_n\Longrightarrow{\rm(i)}_n,\qquad
{\rm(i)}_{2n}\Longrightarrow{\rm(viii)}_n.
$$
\end{lemma}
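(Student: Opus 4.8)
The plan is to isolate one small lemma --- call it the \emph{compression inequality} --- and read off all three implications from it together with suitable block-matrix realizations. The lemma: if $f$ is continuous and $m$-monotone on $[0,\infty)$ and $Y\ge0$ in $\bM_m$ is written in block form
$$
Y=\begin{pmatrix}Y_{11}&Y_{12}\\ Y_{12}^{*}&Y_{22}\end{pmatrix}
$$
with $Y_{11}$ the top-left $k\times k$ corner ($k\le m$), then $[f(Y)]_{11}\le f(Y_{11})$. I would prove this as follows: for each $\delta>0$ a Schur-complement estimate gives $\mathrm{diag}(Y_{11}+\delta I,\,Y_{22}+cI)\ge Y$ as soon as $c\ge\|Y_{12}\|^{2}/\delta$, so $m$-monotony yields $f(Y)\le\mathrm{diag}\bigl(f(Y_{11}+\delta I),\,f(Y_{22}+cI)\bigr)$, hence $[f(Y)]_{11}\le f(Y_{11}+\delta I)$; letting $\delta\downarrow0$ and using continuity of $f$ finishes it. This lemma, and the bookkeeping of which block sizes force which order, is the only real content; in each case below it is the doubling in the block construction that produces the factor $2$.

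For ${\rm(i)}_{2n}\Rightarrow{\rm(ii)}_n$: given $A,B\ge0$ in $\bM_n$ and $\lambda\in(0,1)$, let
$$
\widetilde V:=\begin{pmatrix}\sqrt\lambda\,I&-\sqrt{1-\lambda}\,I\\ \sqrt{1-\lambda}\,I&\sqrt\lambda\,I\end{pmatrix}\in\bM_{2n}\quad(\text{unitary}),\qquad Y:=\widetilde V^{*}\,\mathrm{diag}(A,B)\,\widetilde V\ge0.
$$
A direct computation gives $Y_{11}=\lambda A+(1-\lambda)B$, and from $f(Y)=\widetilde V^{*}\,\mathrm{diag}(f(A),f(B))\,\widetilde V$ also $[f(Y)]_{11}=\lambda f(A)+(1-\lambda)f(B)$. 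The compression inequality applied to $Y\in\bM_{2n}$ --- this is exactly where $2n$-monotony enters --- gives $\lambda f(A)+(1-\lambda)f(B)\le f(\lambda A+(1-\lambda)B)$, i.e.\ ${\rm(ii)}_n$.

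For ${\rm(ii)}_n\Rightarrow{\rm(i)}_n$ when $f>0$ on $(0,\infty)$: continuity gives $f(0)=\lim_{t\downarrow0}f(t)\ge0$, so $f\ge0$ on $[0,\infty)$. For $0\le A\le B$ in $\bM_n$ and $\lambda\in(0,1)$, the matrix $Y_\lambda:=(B-\lambda A)/(1-\lambda)$ is $\ge0$ (since $\lambda A\le\lambda B\le B$) and $B=\lambda A+(1-\lambda)Y_\lambda$; $n$-concavity then gives $f(B)\ge\lambda f(A)+(1-\lambda)f(Y_\lambda)\ge\lambda f(A)$, and $\lambda\uparrow1$ gives $f(A)\le f(B)$.

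For ${\rm(i)}_{2n}\Rightarrow{\rm(viii)}_n$ when $f>0$ on $(0,\infty)$: feeding $\mathrm{diag}(B,0)\ge0$ and the Halmos unitary dilation $W=\begin{pmatrix}X&(I-XX^{*})^{1/2}\\(I-X^{*}X)^{1/2}&-X^{*}\end{pmatrix}\in\bM_{2n}$ of a contraction $X\in\bM_n$ into the compression inequality (using $f(0)\ge0$ to discard a nonnegative term) yields the operator Jensen inequality $f(X^{*}BX)\ge X^{*}f(B)X$ for every $B\ge0$ and every contraction $X$ in $\bM_n$ --- again $2n$ enters through the dilation. Now take $0<A\le B$ in $\bM_n$ and set $C:=A^{1/2}B^{-1/2}$, an invertible contraction with $CBC^{*}=A$. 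Jensen gives $f(A)\ge Cf(B)C^{*}$, hence $f(A)^{-1}\le C^{-*}f(B)^{-1}C^{-1}$ (all operators invertible since $f>0$ on $(0,\infty)$), and conjugating by $A^{1/2}$ and using $C^{-1}A^{1/2}=B^{1/2}$ turns this into
$$
A^{1/2}f(A)^{-1}A^{1/2}\ \le\ B^{1/2}f(B)^{-1}B^{1/2},
$$
i.e.\ $(t/f(t))(A)\le(t/f(t))(B)$; thus $t/f(t)$ is $n$-monotone on $(0,\infty)$. The main obstacle throughout is simply hitting on the compression inequality and the correct block realizations --- once those are in place the rest is routine linear algebra and a single limit.
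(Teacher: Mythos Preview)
Your proof is correct. The paper's own proof of this lemma is almost entirely by citation---it points to the arguments in \cite{Uc} for ${\rm(i)}_{2n}\Rightarrow{\rm(ii)}_n$ and to \cite{HP} for the remaining two, and for ${\rm(i)}_{2n}\Rightarrow{\rm(viii)}_n$ it routes through $-f$ satisfying ${\rm(iv)}_n$ (hence ${\rm(v)}_n$, i.e.\ $-f(t)/t$ is $n$-monotone) and then uses the operator monotony of $-t^{-1}$ on $(-\infty,0)$ to pass from $-f(t)/t$ to $t/f(t)$. Your route is the same Hansen--Pedersen machinery underneath (rotation unitary for concavity, Halmos dilation for the Jensen inequality), but you package it more cleanly by isolating the compression inequality $[f(Y)]_{11}\le f(Y_{11})$ as a single lemma with a short Schur-complement proof, and your final step for ${\rm(viii)}_n$ is genuinely different: instead of passing through the $n$-monotony of $-f(t)/t$ and inverting, you go directly from Jensen via $C=A^{1/2}B^{-1/2}$ to $A^{1/2}f(A)^{-1}A^{1/2}\le B^{1/2}f(B)^{-1}B^{1/2}$. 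This buys you a fully self-contained argument that does not lean on Lemma~\ref{L-1.1} or the monotony of $-t^{-1}$; the paper's route, in exchange, makes visible the intermediate fact that $-f(t)/t$ is itself $n$-monotone, which it uses elsewhere.
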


\begin{proof}
(i)$_{2n}$ $\Rightarrow$ (ii)$_n$ is seen from the proof of \cite[Theorem 2.4]{Uc}. Now
assume that $f(t)>0$ for all $t>0$. Then (ii)$_n$ $\Rightarrow$ (i)$_n$ is seen from the
proof of \cite[Theorem 2.5]{HP}. Next, suppose (i)$_{2n}$. Since $f$ is $2n$-monotone on
$[0,\infty)$ with $-f\le0$, the proof of \cite[Theorem 2.5]{HP} shows that $-f$ satisfies
(iv)$_n$ and hence (v)$_n$ by Lemma \ref{L-1.1}, so $-f(t)/t$ is $n$-monotone on
$(0,\infty)$. Since $-t^{-1}$ is operator monotone on $(-\infty,0)$, it follows that
$t/f(t)=-(-f(t)/t)^{-1}$ is $n$-monotone on $(0,\infty)$. Hence (viii)$_n$ follows.
\end{proof}

Let $f$ be as in Lemma \ref{L-1.2} such that $f(t)>0$ for all $t>0$. Since (viii)$_n$ is
equivalent to the $n$-monotony of $-f(t)/t$ on $(0,\infty)$, we further have
(viii)$_{2n}$ $\Rightarrow$ (ii)$_n$ and (ii)$_{n+1}$ $\Rightarrow$ (viii)$_n$ by applying
Lemma \ref{L-1.1} to $-f$, though not used in the rest of the paper.

\begin{lemma}\label{L-1.3}
Let $f$ be a continuous real function on $[0,\infty)$ such that $f(t)>0$ for all $t>0$.
Then for every $n\in\bN$ the following hold:
$$
{\rm(v)}_{2n}\Longrightarrow{\rm(ix)}_n,\qquad
{\rm(ix)}_{2n}\Longrightarrow{\rm(v)}_n.
$$
\end{lemma}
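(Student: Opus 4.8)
The plan is to reduce both implications to Lemma \ref{L-1.1} applied to a suitably transformed function, exploiting the fact that $n$-monotony is preserved under composition with operator monotone bijections and under the order-reversing operation $g\mapsto -g^{-1}$ on functions of constant sign. Write $g(t):=f(t)/t$, so that (v)$_n$ is the $n$-monotony of $g$ on $(0,\infty)$, and note $g(t)>0$ for all $t>0$ since $f(t)>0$. The key identity is $t^2/f(t)=t/g(t)=t\cdot(1/g(t))$: thus $t^2/f(t)$ is exactly the function obtained from $g$ by the operation ``$h\mapsto t/h(t)$'' that appears in condition (viii) relative to $g$. Concretely, applying Lemma \ref{L-1.2} to $g$ in place of $f$ (legitimate since $g$ is continuous on $[0,\infty)$ after noting $f(0)\ge 0$, or by working on $(0,\infty)$ throughout and observing that $n$-monotony is a pointwise Loewner-matrix condition insensitive to the endpoint), the implication (i)$_{2n}\Rightarrow$(viii)$_n$ for $g$ reads: if $g=f(t)/t$ is $2n$-monotone on $(0,\infty)$ then $t/g(t)=t^2/f(t)$ is $n$-monotone on $(0,\infty)$, which is precisely (v)$_{2n}\Rightarrow$(ix)$_n$.

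For the reverse implication (ix)$_{2n}\Rightarrow$(v)$_n$, I would run the same correspondence backwards using the remark immediately following Lemma \ref{L-1.2}: for a positive function, (ii)$_{n+1}\Rightarrow$(viii)$_n$, and more to the point one has the converse-type passage via applying Lemma \ref{L-1.1} to $-g$. Specifically, $t^2/f(t)=t/g(t)$ being $2n$-monotone means $-g(t)/t\cdot(-1)=\,$, better: set $h(t):=t/g(t)=t^2/f(t)$, which is positive on $(0,\infty)$; then $-h(t)/t=-1/g(t)$, and $g(t)=-(-1/g(t))^{-1}$ recovers $g$ from $-1/g$ by the operator monotone map $s\mapsto -1/s$ on $(-\infty,0)$. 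So: $h$ is $2n$-monotone $\Rightarrow$ (by Lemma \ref{L-1.1} applied to $-h$, giving (v)$_{2n}\Rightarrow$(iii)$_n$ for $-h$, then (iii) $\Rightarrow$ (v) again, or directly via the (i)$_{2n}\Rightarrow$(viii)$_n$ clause of Lemma \ref{L-1.2} applied to $h$) $\Rightarrow$ $t/h(t)$ is $n$-monotone; but $t/h(t)=g(t)$, which is (v)$_n$. Thus both directions are obtained by a single device — viewing (ix) as (viii) for $g=f(t)/t$ — applied once forwards and once with the roles of $g$ and $t/g(t)$ interchanged.

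The only genuine care needed is bookkeeping of the order indices through the composition with $s\mapsto -1/s$ (which is an operator monotone bijection and hence costs nothing in order) versus the operation $h\mapsto -f\mapsto$ (iv) $\mapsto$ (v), where Lemma \ref{L-1.1}'s implication (v)$_{2n}\Rightarrow$(iii)$_n$ and the pair (iii)$_{n+1}\Rightarrow$(iv)$_n\Leftrightarrow$(v)$_n$ must be chained so that the total doubling of $n$ matches the stated $2n$ in both lines of the lemma. I expect the main (very minor) obstacle to be justifying that Lemma \ref{L-1.2}, stated for functions continuous on $[0,\infty)$, may be invoked for $g(t)=f(t)/t$, which a priori is only defined and positive on $(0,\infty)$: this is handled by observing that every condition involved — $n$-monotony and (viii)$_n$, (ix)$_n$ — refers only to values on $(0,\infty)$ and is characterized by positive semidefiniteness of Loewner matrices at points of $(0,\infty)$, so the behaviour of $g$ at $0$ is irrelevant and the relevant implications from the proofs of \cite[Theorems 2.4, 2.5]{HP} and Lemma \ref{L-1.1} go through verbatim on $(0,\infty)$.
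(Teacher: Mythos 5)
Your proposal is correct and takes essentially the same route as the paper: the paper's one-line proof is precisely to note $t^2/f(t)=t/(f(t)/t)$ and $f(t)/t=t/(t^2/f(t))$ and apply (i)$_{2n}$ $\Rightarrow$ (viii)$_n$ of Lemma \ref{L-1.2} to $f(t)/t$ and to $t^2/f(t)$, which is what you do (your detour via Lemma \ref{L-1.1} applied to $-h$ is unnecessary, as you yourself conclude). The endpoint issue you flag is passed over silently in the paper and is indeed harmless: a positive $2n$-monotone function on $(0,\infty)$ (with $2n\ge2$) is nondecreasing, hence extends continuously to $[0,\infty)$, or one may shift by $\eps$ and let $\eps\searrow0$.
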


\begin{proof}
Since $t^2/f(t)=t/(f(t)/t)$ and $f(t)/t=t/(t^2/f(t))$, the stated implications are
immediately seen from (i)$_{2n}$ $\Rightarrow$ (viii)$_n$ of Lemma \ref{L-1.2}.
\end{proof}

\begin{lemma}\label{L-1.4}
Let $f$ be a real $C^1$ function on $[0,\infty)$ such that $f(t)>0$ for all $t>0$,
$f(0)=0$, and $f'(0)\ge0$. Then for every $n\in\bN$ the following implications hold:
$$
{\rm(vi)}_{n+1}\Longrightarrow{\rm(ix)}_n\Longrightarrow{\rm(vi)}_n.
$$
\end{lemma}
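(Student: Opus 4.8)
The plan is to connect the two implications through the divided-difference identity relating $L_f$ and $L_{t^2/f(t)}$. The key observation is that for a positive $C^1$ function $f$ with $f(0)=0$, one has the factorization
$$
\biggl(\frac{t^2}{f(t)}\biggr)^{[1]}(s,t)
=\frac{st}{f(s)f(t)}\cdot\Bigl(s+t-st\,f^{[1]}(s,t)\,\cdot(\text{something})\Bigr),
$$
but rather than push a messy formula, I would instead exploit the conditions already assembled in Section 1. Recall that condition (ix)$_n$ is the $n$-monotony of $g(t):=t^2/f(t)$ on $(0,\infty)$, which by L\"owner's theorem is equivalent to positive semidefiniteness of $L_g(t_1,\dots,t_n)$ for all choices of points. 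So the lemma is really a statement relating the conditional negative definiteness of $L_f$ to the positive semidefiniteness of $L_{t^2/f(t)}$.

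First I would prove (vi)$_{n+1}\Rightarrow$(ix)$_n$. Fix $t_1,\dots,t_n\in(0,\infty)$ and adjoin a point; the standard device for passing from c.n.d.\ of an $(n+1)\times(n+1)$ Loewner matrix to positive semidefiniteness of an $n\times n$ one is to use the point $t_{n+1}$ as a ``pivot'' and form the matrix of second differences, or equivalently to use the congruence that sends a c.n.d.\ matrix $[a_{ij}]$ to the p.s.d.\ matrix $[a_{i,n+1}+a_{n+1,j}-a_{ij}-a_{n+1,n+1}]$. Applied with $a_{ij}=f^{[1]}(t_i,t_j)$, the resulting entries must be massaged, via the elementary identity $f^{[1]}(s,u)+f^{[1]}(u,t)-f^{[1]}(s,t)-f'(u)=(s-t)\cdot\text{(second divided difference terms)}$, into a Hadamard product with the rank-one positive matrix $[1/(f(t_i)f(t_j))]$-type weights together with a limiting $u\searrow0$ argument using $f(0)=0$, $f'(0)\ge0$. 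After the dust settles the expression becomes (up to a positive congruence by $\mathrm{diag}(t_i/f(t_i))$) precisely $L_g(t_1,\dots,t_n)$, giving (ix)$_n$.

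For the reverse, (ix)$_n\Rightarrow$(vi)$_n$, I would run essentially the same identity backwards: assuming $L_g(t_1,\dots,t_n)\ge0$ for $g(t)=t^2/f(t)$, a congruence by $\mathrm{diag}(f(t_i)/t_i)$ followed by the same algebraic rearrangement exhibits $-L_f(t_1,\dots,t_n)$ as (p.s.d.\ matrix) plus (a matrix of the form $x y^* + y x^*$ for suitable vectors $x,y$ built from $1/t_i$ and $f(t_i)/t_i$), i.e.\ a c.p.d.\ perturbation of a p.s.d.\ matrix, hence c.p.d.; therefore $L_f$ is c.n.d. The boundary hypotheses $f(0)=0$ and $f'(0)\ge 0$ are what guarantee the rank-one correction terms have the correct sign and that no $n+1$st point is needed here.

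The main obstacle I anticipate is the bookkeeping in the divided-difference identity: one must verify cleanly that
$$
f^{[1]}(s,t)\;=\;\frac{f(s)f(t)}{st}\biggl(\frac{s+t}{st}-\Bigl(\frac{t^2}{f(t)}\Bigr)^{[1]}(s,t)\cdot\frac{1}{\,?\,}\biggr)
$$
holds in the right form — in other words, finding the exact algebraic relation between $f^{[1]}$ and $g^{[1]}$ (with $g=t^2/f$) and checking it extends continuously to the diagonal and, after the congruences, separates into a genuinely p.s.d.\ part plus a genuinely c.p.d.\ rank-one part. Once that identity is pinned down, both implications are immediate from L\"owner's theorem and the definition of c.n.d./c.p.d.; everything else is routine. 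I would also double-check the degenerate case where some $f(t_i)$ could be small, but positivity of $f$ on $(0,\infty)$ makes the diagonal congruences legitimate.
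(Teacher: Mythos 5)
Your plan is essentially the paper's own proof: the pivot device that turns a c.n.d.\ $(n+1)\times(n+1)$ matrix into a p.s.d.\ $n\times n$ matrix via $[a_{ij}-a_{i,n+1}-a_{n+1,j}+a_{n+1,n+1}]$, the limit $t_{n+1}\searrow0$ using $f(0)=0$ and $f'(0)\ge0$, a diagonal congruence by $\mathrm{diag}(t_i/f(t_i))$, and L\"owner's theorem; and for the converse the same identity read backwards, with a correction matrix $\bigl[f(t_i)/t_i+f(t_j)/t_j\bigr]$ that vanishes on $\bC_0^n$. The one ingredient you left unresolved (your two displayed guesses are not correct as written) is exactly the paper's identity \eqref{F-1.3}:
$$
f^{[1]}(s,t)-\frac{f(s)}{s}-\frac{f(t)}{t}
=-\frac{f(s)}{s}\cdot\Bigl(\frac{t^2}{f(t)}\Bigr)^{[1]}(s,t)\cdot\frac{f(t)}{t},
$$
which is a one-line check: for $s\ne t$ both sides equal $\dfrac{t^2f(s)-s^2f(t)}{st(s-t)}$, and the formula extends continuously to the diagonal. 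With this in hand your outline closes exactly as in the paper (the $f'(0)E_n$ term is handled by $f'(0)\ge0$ after the congruence). Two minor corrections: the boundary hypotheses $f(0)=0$, $f'(0)\ge0$ are used only in the direction (vi)$_{n+1}\Rightarrow$(ix)$_n$ (the converse needs only $f>0$ on $(0,\infty)$), and the correction term in the converse is built from the all-ones vector and the vector $(f(t_i)/t_i)_i$, not from $(1/t_i)_i$.
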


\begin{proof}
(vi)$_{n+1}$ $\Rightarrow$ (ix)$_n$.\enspace
First, recall (see \cite[p.\ 193]{BR} or \cite[p.\ 134]{Do}) that if a Hermitian
$(n+1)\times(n+1)$ matrix $[a_{ij}]_{i,j=1}^{n+1}$ is c.p.d., then the $n\times n$ matrix
$$
\bigl[a_{ij}-a_{i,n+1}-a_{n+1,j}+a_{n+1,n+1}\bigr]_{i,j=1}^n
$$
is positive semidefinite. Hence for every $t_1,\dots,t_n,t_{n+1}\in(0,\infty)$,
assumption (vi)$_{n+1}$ implies that
$$
\bigl[f^{[1]}(t_i,t_j)-f^{[1]}(t_i,t_{n+1})-f^{[1]}(t_j,t_{n+1})
+f'(t_{n+1})\bigr]_{i,j=1}^n\le0.
$$
Since $f(0)=0$, letting $t_{n+1}\searrow0$ yields that
$$
\biggl[f^{[1]}(t_i,t_j)-{f(t_i)\over t_i}-{f(t_j)\over t_j}+f'(0)\biggr]_{i,j=1}^n\le0.
$$
Since
\begin{equation}\label{F-1.3}
f^{[1]}(t_i,t_j)-{f(t_i)\over t_i}-{f(t_j)\over t_j}
=-{f(t_i)\over t_i}\cdot\biggl({t^2\over f(t)}\biggr)^{[1]}(t_i,t_j)
\cdot{f(t_j)\over t_j},
\end{equation}
we see that
$$
\biggl[{f(t_i)\over t_i}\cdot\biggl({t^2\over f(t)}\biggr)^{[1]}(t_i,t_j)
\cdot{f(t_j)\over t_j}\biggr]_{i,j=1}^n-f'(0)E_n\ge0,
$$
where $E_n$ stands for the $n\times n$ matrix of all entries equal to $1$. Since
$f'(0)\ge0$, we have $L_{t^2/f(t)}(t_1,\dots,t_n)\ge0$, which yields (ix)$_n$ by
L\"owner's theorem.

(ix)$_n$ $\Rightarrow$ (vi)$_n$.\enspace
For every $t_1,\dots,t_n\in(0,\infty)$, it follows from \eqref{F-1.3} that
$$
L_f(t_1,\dots,t_n)
=-\biggl[{f(t_i)\over t_i}\cdot\biggl({t^2\over f(t)}\biggr)^{[1]}(t_i,t_j)
\cdot{f(t_j)\over t_j}\biggr]_{i,j=1}^n
+\biggl[{f(t_i)\over t_i}+{f(t_j)\over t_j}\biggr]_{i,j=1}^n.
$$
Since $L_{t^2/f(t)}(t_1,\dots,t_n)\ge0$ by assumption (ix)$_n$, the above expression yields
that $L_f(t_1,\dots,t_n)$ is c.n.d.
\end{proof}

The proof of the next lemma is a modification of the argument in \cite[p.\ 428]{Ho}.

\begin{lemma}\label{L-1.5}
Let $f$ be a continuous real function on $[0,\infty)$ with $f(0)=0$ such that $f$ is
$C^1$ on $(0,\infty)$ and $\lim_{t\searrow0}tf'(t)=0$. {\rm(}This is the case if $f$ is
$C^1$ on $[0,\infty)$ with $f(0)=0$.{\rm)} Then for every $n\in\bN$ the following
implications hold:
$$
{\rm(vii)}_{n+1}\Longrightarrow{\rm(v)}_n\Longrightarrow{\rm(vii)}_n.
$$
\end{lemma}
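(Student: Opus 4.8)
The plan is to reduce everything to L\"owner's theorem by means of one algebraic identity for divided differences. Put $g(t):=tf(t)$ and $h(t):=f(t)/t$, so that $f(t)=th(t)$ and $g(t)=t^2h(t)$ on $(0,\infty)$. Splitting $s^2h(s)-t^2h(t)=s(s-t)h(s)+st\bigl(h(s)-h(t)\bigr)+t(s-t)h(t)$ and dividing by $s-t$ (and checking the diagonal case $s=t$ by hand using $g'(t)=f(t)+tf'(t)$) gives
\[
g^{[1]}(s,t)=f(s)+f(t)+st\,h^{[1]}(s,t),\qquad s,t\in(0,\infty).
\]
Moreover, the hypotheses $f(0)=0$ and $\lim_{t\searrow0}tf'(t)=0$ say exactly that $g$ extends to a $C^1$ function on $[0,\infty)$ with $g(0)=g'(0)=0$; in particular $g^{[1]}$ is continuous on $[0,\infty)^2$ with $g^{[1]}(s,0)=f(s)$ for $s>0$.

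Next I record that for any $t_1,\dots,t_n\in(0,\infty)$ the Hermitian matrix $[f(t_i)+f(t_j)]_{i,j=1}^n$ is ``conditionally zero'': for $x\in\bC_0^n$,
$\<x,[f(t_i)+f(t_j)]x\>=\bigl(\sum_i\overline{x_i}f(t_i)\bigr)\bigl(\sum_jx_j\bigr)+\bigl(\sum_i\overline{x_i}\bigr)\bigl(\sum_jf(t_j)x_j\bigr)=0$,
so it is simultaneously c.p.d.\ and c.n.d. Writing $D:=\mathrm{diag}(t_1,\dots,t_n)>0$, the displayed identity reads $L_g(t_1,\dots,t_n)=D\,L_h(t_1,\dots,t_n)\,D+[f(t_i)+f(t_j)]_{i,j=1}^n$, hence $L_g(t_1,\dots,t_n)$ is c.p.d.\ if and only if $D\,L_h(t_1,\dots,t_n)\,D$ is c.p.d. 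This makes ${\rm(v)}_n\Rightarrow{\rm(vii)}_n$ immediate: if $h$ is $n$-monotone on $(0,\infty)$ then $L_h(t_1,\dots,t_n)\ge0$ by L\"owner's theorem, hence $D\,L_h\,D\ge0$ is in particular c.p.d., hence $L_g(t_1,\dots,t_n)$ is c.p.d.

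For ${\rm(vii)}_{n+1}\Rightarrow{\rm(v)}_n$ I would argue as in the proof of Lemma \ref{L-1.4}. Fix $t_1,\dots,t_n\in(0,\infty)$ and $\eps>0$, and apply to the c.p.d.\ matrix $L_g(t_1,\dots,t_n,\eps)$ the fact used there (c.p.d.\ of $[a_{ij}]_{i,j=1}^{n+1}$ implies $[a_{ij}-a_{i,n+1}-a_{n+1,j}+a_{n+1,n+1}]_{i,j=1}^n\ge0$), which gives
\[
\bigl[g^{[1]}(t_i,t_j)-g^{[1]}(t_i,\eps)-g^{[1]}(t_j,\eps)+g'(\eps)\bigr]_{i,j=1}^n\ge0.
\]
Letting $\eps\searrow0$, the continuity statement above yields $g^{[1]}(t_i,\eps)\to f(t_i)$ and $g'(\eps)\to g'(0)=0$, so in the limit $\bigl[g^{[1]}(t_i,t_j)-f(t_i)-f(t_j)\bigr]_{i,j=1}^n\ge0$, which by the identity equals $D\,L_h(t_1,\dots,t_n)\,D$. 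Since $D>0$ this forces $L_h(t_1,\dots,t_n)\ge0$, and as $t_1,\dots,t_n$ are arbitrary, L\"owner's theorem yields ${\rm(v)}_n$. The only genuinely substantive point is deriving the divided-difference identity and handling its diagonal case; the boundary hypotheses enter only through the limit $\eps\searrow0$, and I foresee no real obstacle.
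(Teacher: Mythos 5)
Your proof is correct and follows essentially the same route as the paper's: the same divided-difference identity (the case of \eqref{F-1.4} for $g(t)=tf(t)$ itself), the same c.p.d.-to-positive-semidefinite shrinkage fact borrowed from the proof of Lemma \ref{L-1.4}, the conditionally-zero matrix $[f(t_i)+f(t_j)]$, and L\"owner's theorem. The only deviation is a harmless streamlining: where the paper first regularizes with $g_\eps(t)=g(t+\eps)-g(\eps)-g'(\eps)t$ and takes a second limit $\eps\searrow0$ at the end, you observe (correctly, since $g'(t)=f(t)+tf'(t)\to0$ and $g(t)/t=f(t)\to0$ as $t\searrow0$) that $g$ is already $C^1$ on $[0,\infty)$ with $g(0)=g'(0)=0$, and send the extra Loewner point directly to $0$.
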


\begin{proof}
(vii)$_{n+1}$ $\Rightarrow$ (v)$_n$.\enspace
Set $g(t):=tf(t)$ for $t\in[0,\infty)$ and for each $\eps>0$ define
$$
g_\eps(t):=g(t+\eps)-g(\eps)-g'(\eps)t,\qquad t\in[0,\infty).
$$
Then $g_\eps$ is $C^1$ on $[0,\infty)$ and $g_\eps(0)=g_\eps'(0)=0$. From assumption
(vii)$_{n+1}$ it follows that $L_{g_\eps}(t_1,\dots,t_n,t_{n+1})$ is c.p.d.\ for every
$t_1,\dots,t_n,t_{n+1}\in(0,\infty)$. Hence similarly to the proof of Lemma \ref{L-1.4}
we have
$$
\biggl[g_\eps^{[1]}(t_i,t_j)
-{g_\eps(t_i)\over t_i}-{g_\eps(t_j)\over t_j}\biggr]_{i,j=1}^n\ge0
$$
for every $t_1,\dots,t_n\in(0,\infty)$. Since
\begin{equation}\label{F-1.4}
g_\eps^{[1]}(t_i,t_j)-{g_\eps(t_i)\over t_i}-{g_\eps(t_j)\over t_j}
=t_i\cdot\biggl({g_\eps(t)\over t^2}\biggr)^{[1]}(t_i,t_j)\cdot t_j,
\end{equation}
we see that $L_{g_\eps(t)/t^2}(t_1,\dots,t_n)\ge0$. Since $g(\eps)\to0$ and
$g'(\eps)=f(\eps)+\eps f'(\eps)\to0$ as $\eps\searrow0$ thanks to assumption on $f$, it
follows that $g_\eps(t)/t^2\to g(t)/t^2=f(t)/t$ as $\eps\searrow0$ for any $t>0$. Hence
we have $L_{f(t)/t}(t_1,\dots,t_n)\ge0$, which yields (v)$_n$.

(v)$_n$ $\Rightarrow$ (vii)$_n$.\enspace
Let $g$ be as above. For every $t_1,\dots,t_n\in(0,\infty)$, from \eqref{F-1.4} for $g$
instead of $g_\eps$ we have
$$
L_g(t_1,\dots,t_n)
=\biggl[t_i\cdot\biggl({f(t)\over t}\biggr)^{[1]}(t_i,t_j)\cdot t_j\biggr]_{i,j=1}^n
+\biggl[{g(t_i)\over t_i}+{g(t_j)\over t_j}\biggr]_{i,j=1}^n,
$$
which is c.p.d.\ due to (v)$_n$.
\end{proof}

\section{Functions on $(0,\infty)$}
\setcounter{equation}{0}

The aim of this section is to relate the $n$-convexity and the $n$-monotony of a $C^1$
function on $(0,\infty)$ to the c.p.d.\ and the c.n.d.\ of the Loewner matrices associated
with certain corresponding functions. The first theorem is concerned with $n$-convex
functions on $(0,\infty)$.

\begin{thm}\label{T-2.1}
Let $f$ be a real $C^1$ function on $(0,\infty)$. For each $n\in\bN$ consider the
following conditions:
\begin{itemize}
\item[\rm(a)$_n$] $f$ is $n$-convex on $(0,\infty)$;
\item[\rm(b)$_n$] $\liminf_{t\to\infty}f(t)/t>-\infty$ and $L_f(t_1,\dots,t_n)$ is
c.n.d.\ for all $t_1,\dots,t_n\in(0,\infty)$;
\item[\rm(c)$_n$] $\limsup_{t\searrow0}tf(t)\ge0$ and $L_{tf(t)}(t_1,\dots,t_n)$ is
c.p.d.\ for all $t_1,\dots,t_n\in(0,\infty)$.
\end{itemize}
Then for every $n\in\bN$ the following implications hold:
$$
{\rm(a)}_{2n+1}\Longrightarrow{\rm(b)}_n,\quad
{\rm(b)}_{4n+1}\Longrightarrow{\rm(a)}_n,\quad
{\rm(a)}_{n+1}\Longrightarrow{\rm(c)}_n,\quad
{\rm(c)}_{2n+1}\Longrightarrow{\rm(a)}_n.
$$
\end{thm}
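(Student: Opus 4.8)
The plan is to derive all four implications from the lemmas of Section~1 by reducing statements about $f$ on $(0,\infty)$ to statements about an auxiliary function on $[0,\infty)$ that vanishes appropriately at the origin. The key device is: given a $C^1$ function $f$ on $(0,\infty)$ and $\eps>0$, form $h_\eps(t):=f(t+\eps)-f(\eps)-f'(\eps)t$ on $[0,\infty)$; then $h_\eps$ is $C^1$ on $[0,\infty)$ with $h_\eps(0)=h_\eps'(0)=0$, and $n$-convexity of $f$ on $(0,\infty)$ is equivalent to $n$-convexity of each $h_\eps$ on $[0,\infty)$ (shifting does not affect convexity, and subtracting an affine term preserves matrix convexity). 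Similarly $L_{h_\eps}(t_1,\dots,t_n)=L_f(t_1+\eps,\dots,t_n+\eps)$ modulo a matrix of the form $[\phi(t_i)+\psi(t_j)]$ coming from the affine part, which is both c.p.d.\ and c.n.d., so conditional definiteness of Loewner matrices transfers between $f$ and $h_\eps$ as well. The growth/boundary conditions in (b)$_n$ and (c)$_n$ are exactly what is needed to control the limit $\eps\searrow0$ (or $\eps\to\infty$).

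First I would prove (a)$_{n+1}\Rightarrow$(c)$_n$ and (c)$_{2n+1}\Rightarrow$(a)$_n$, which are the cleaner pair. For (a)$_{n+1}\Rightarrow$(c)$_n$: from $n{+}1$-convexity of $f$, each $h_\eps$ satisfies (iii)$_{n+1}$ (it is $(n{+}1)$-convex with $h_\eps(0)=0$), so by Lemma~\ref{L-1.1} it satisfies (v)$_n$, hence by Lemma~\ref{L-1.5} (applicable since $h_\eps(0)=0$ and $h_\eps$ is $C^1$ on $[0,\infty)$) it satisfies (vii)$_n$, i.e.\ $L_{th_\eps(t)}$ is c.p.d. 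Translating back, $L_{tf(t)}(t_1,\dots,t_n)$ differs from a c.p.d.\ matrix by rank-controlled terms $[\phi(t_i)+\psi(t_j)]$, hence is c.p.d.; and the boundary condition $\limsup_{t\searrow0}tf(t)\ge0$ will drop out of examining the constant term in these manipulations. For (c)$_{2n+1}\Rightarrow$(a)$_n$: run the chain backwards, using (vii)$_{n+1}\Rightarrow$(v)$_n$ from Lemma~\ref{L-1.5} and (v)$_{2n}\Rightarrow$(iii)$_n$ from Lemma~\ref{L-1.1}, applied to $h_\eps$ built from the shifted function; combining the indices gives $2n{+}1$, and passing to the limit $\eps\searrow0$ recovers $n$-convexity of $f$ on $(0,\infty)$.

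For (a)$_{2n+1}\Rightarrow$(b)$_n$ and (b)$_{4n+1}\Rightarrow$(a)$_n$ I would use the same shift trick together with Lemma~\ref{L-1.4}, but here the extra hypotheses of Lemma~\ref{L-1.4} (namely $g>0$ on $(0,\infty)$ for the function $g$ to which it is applied) force an additional reduction: one cannot apply Lemma~\ref{L-1.4} to $h_\eps$ directly since $h_\eps$ need not be positive. The fix is to apply it to $g:=c+h_\eps$ for a suitable constant $c>0$ making $g$ positive on $(0,\infty)$ (possible on compact $t$-ranges, or after exploiting $\liminf_{t\to\infty}f(t)/t>-\infty$ to get a global lower affine bound) — adding a constant shifts $g$ but changes $t^2/g(t)$ and the Loewner matrix of $g$ only by the harmless rank terms again, while $n$-convexity is unchanged. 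Then (a)$_{2n+1}$ for $f$ gives, via (iii)$_{n+1}\Rightarrow$(v)$_n$ (Lemma~\ref{L-1.1}) then (v)$_{2n}\Rightarrow$(ix)$_n$ (Lemma~\ref{L-1.3}) then (ix)$_{n+1}\Rightarrow$(vi)$_n$ (Lemma~\ref{L-1.4}), the c.n.d.\ conclusion — the index bookkeeping here is what produces the $2n{+}1$ and $4n{+}1$ (one composes (v)$_{2n}\Rightarrow$(ix)$_n$ with (ix)$_{2n}\Rightarrow$(v)$_n$ and the $\pm1$ shifts from Lemmas~\ref{L-1.4},~\ref{L-1.5}, so the worst chain costs a factor $4$ plus $1$).

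The main obstacle I expect is the passage to the limit $\eps\searrow0$ in the two implications that reconstruct $n$-convexity of $f$, i.e.\ (b)$_{4n+1}\Rightarrow$(a)$_n$ and (c)$_{2n+1}\Rightarrow$(a)$_n$: one needs uniform control so that $n$-convexity of the $h_\eps$ (equivalently of the shifted data) actually yields $n$-convexity of $f$ on the open interval, and one must check that the auxiliary positivity constant $c$ can be chosen consistently — this is precisely where $\liminf_{t\to\infty}f(t)/t>-\infty$ and $\limsup_{t\searrow0}tf(t)\ge0$ are consumed, and verifying that these one-sided bounds suffice (rather than, say, two-sided bounds) is the delicate point. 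The rest is bookkeeping of indices through the lemma chains and routine manipulation of the rank-one-type correction matrices $[\phi(t_i)+\psi(t_j)]$, which are automatically both c.p.d.\ and c.n.d.
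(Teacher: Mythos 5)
Your overall architecture---shift by $\eps$ to reduce to a function on $[0,\infty)$ vanishing at $0$, run the chains of Lemmas \ref{L-1.1}--\ref{L-1.5}, and transfer conditional definiteness through corrections of the form $[\phi(t_i)+\psi(t_j)]$---is indeed the paper's strategy, and your route to (a)$_{n+1}\Rightarrow$(c)$_n$ essentially works. But there are two genuine gaps. First, the auxiliary function $h_\eps(t)=f(t+\eps)-f(\eps)-f'(\eps)t$ is unusable for the (a)--(b) implications, and your repair breaks the lemmas you need: Lemma \ref{L-1.4} requires both $g(t)>0$ for $t>0$ \emph{and} $g(0)=0$ (its proof lets $t_{n+1}\searrow0$ and uses $g(0)=0$), and condition (iii)$_n$ in Lemma \ref{L-1.1} requires $g(0)\le0$; adding a constant $c>0$ destroys these, and moreover $t^2/(c+h_\eps(t))$ is not related to $t^2/h_\eps(t)$ by corrections of the form $[\phi(t_i)+\psi(t_j)]$, so (ix) does not transfer. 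The paper's device avoids all of this: since $\liminf_{t\to\infty}f(t)/t>-\infty$ (from (b), or from numerical convexity in (a)), the infimum $\inf_{t>0}\bigl(f(t+\eps)-f(\eps)\bigr)/t$ is finite; choosing $\gamma_\eps$ strictly below it and setting $f_\eps(t)=f(t+\eps)-f(\eps)-\gamma_\eps t$ gives $f_\eps>0$ on $(0,\infty)$, $f_\eps(0)=0$, $f_\eps'(0)>0$, while $L_{f_\eps}(t_1,\dots,t_n)=L_f(t_1+\eps,\dots,t_n+\eps)-\gamma_\eps E_n$, so c.n.d.\ transfers both ways. This is exactly where the boundary condition in (b) is consumed; by contrast, the limit $\eps\searrow0$ that you flag as the main obstacle is harmless, since $n$-convexity of $f(t+\eps)$ for every $\eps>0$ gives $n$-convexity of $f$ directly (spectra lie in compact subsets of $(0,\infty)$), with no uniformity needed.

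Second, your plan for (c)$_{2n+1}\Rightarrow$(a)$_n$ does not get off the ground: the hypothesis concerns $L_{tf(t)}$, and shifting $f$ does not transfer it. Indeed $th_\eps(t)=tf(t+\eps)-f(\eps)t-f'(\eps)t^2$, and with $s_i:=t_i+\eps$ one has $L_{tf(t+\eps)}(t_1,\dots,t_n)=L_{tf(t)}(s_1,\dots,s_n)-\eps L_f(s_1,\dots,s_n)$; the term $\eps L_f$ is not of the harmless form, and (c) gives no information about it, so the c.p.d.\ hypothesis does not pass to $t\,h_\eps(t)$. The paper instead shifts $g(t):=tf(t)$ itself, setting $g_k(t)=g(t+\eps_k)-g(\eps_k)-g'(\eps_k)t$, which factors as $g_k(t)=tf_k(t)$ with a \emph{new} function $f_k$ (not a shift of $f$) satisfying the hypotheses of Lemma \ref{L-1.5}; then (vii)$_{2n+1}\Rightarrow$(v)$_{2n}\Rightarrow$(iii)$_n$ makes $f_k$ $n$-convex, and $n$-convexity of $f$ is recovered from $f_k(t)=(t+\eps_k)f(t+\eps_k)/t-g(\eps_k)/t-g'(\eps_k)$ by either adding back the operator convex function $g(\eps_k)/t$ when $g(\eps_k)>0$, or choosing $\eps_k$ with $g(\eps_k)\to0$; this case split is precisely where $\limsup_{t\searrow0}tf(t)\ge0$ is used, and it is absent from your sketch. (A minor point: Lemma \ref{L-1.4} gives (ix)$_n\Rightarrow$(vi)$_n$, not (ix)$_{n+1}\Rightarrow$(vi)$_n$; the correct chains are (iii)$_{2n+1}\Rightarrow$(v)$_{2n}\Rightarrow$(ix)$_n\Rightarrow$(vi)$_n$ and (vi)$_{4n+1}\Rightarrow$(ix)$_{4n}\Rightarrow$(v)$_{2n}\Rightarrow$(iii)$_n$, which do yield the stated indices $2n+1$ and $4n+1$.)
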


\begin{proof}
First, note that $\lim_{t\to\infty}f(t)/t>-\infty$ (the limit may be $+\infty$) and
$\liminf_{t\searrow0}tf(t)\allowbreak\ge0$, slightly stronger than the boundary conditions
in (b)$_n$ and (c)$_n$, are satisfied as long as $f$ satisfies (a)$_1$, i.e., $f$ is
convex as a numerical function on $(0,\infty)$. When $\liminf_{t\to\infty}f(t)/t>-\infty$,
for any $\eps>0$ it follows that
$$
\inf_{t\in(0,\infty)}{f(t+\eps)-f(\eps)\over t}>-\infty.
$$
So one can choose a $\gamma_\eps\in\bR$ smaller than the above infimum and define
$$
f_\eps(t):=f(t+\eps)-f(\eps)-\gamma_\eps t,\qquad t\in[0,\infty),
$$
so that $f_\eps(t)>0$ for all $t\in(0,\infty)$, $f_\eps(0)=0$ and $f_\eps'(0)>0$. In the
proof below, $f_\eps$ will be such a function chosen for each $\eps>0$.

(a)$_{2n+1}$ $\Rightarrow$ (b)$_n$.\enspace
For any $\eps>0$, since (a)$_{2n+1}$ implies that $f_\eps$ is $(2n+1)$-convex on
$[0,\infty)$, one can apply (iii)$_{2n+1}$ $\Rightarrow$ (v)$_{2n}$ $\Rightarrow$ (ix)$_n$
$\Rightarrow$ (vi)$_n$ of Lemmas \ref{L-1.1}, \ref{L-1.3}, and \ref{L-1.4} to $f_\eps$ so
that $L_{f_\eps}(t_1,\dots,t_n)$ is c.n.d.\ for every $t_1,\dots,t_n\in(0,\infty)$. Since
\begin{equation}\label{F-2.1}
L_{f_\eps}(t_1,\dots,t_n)=L_f(t_1+\eps,\dots,t_n+\eps)-\gamma_\eps E_n,
\end{equation}
it follows that $L_f(t_1+\eps,\dots,t_n+\eps)$ is c.n.d. Hence (b)$_n$ holds since
$\eps>0$ is arbitrary.

(b)$_{4n+1}$ $\Rightarrow$ (a)$_n$.\enspace
For any $\eps>0$, thanks to \eqref{F-2.1} with $4n+1$ in place of $n$, it follows from
(b)$_{4n+1}$ that (vi)$_{4n+1}$ is satisfied for $f_\eps$. So one can apply (vi)$_{4n+1}$
$\Rightarrow$ (ix)$_{4n}$ $\Rightarrow$ (v)$_{2n}$ $\Rightarrow$ (iii)$_n$ of Lemmas
\ref{L-1.4}, \ref{L-1.3}, and \ref{L-1.1} to $f_\eps$ so that $f_\eps$ is $n$-convex on
$[0,\infty)$. Hence $f(t+\eps)$ is $n$-convex on $[0,\infty)$ so that (a)$_n$ follows
since $\eps>0$ is arbitrary.

(a)$_{n+1}$ $\Rightarrow$ (c)$_n$.\enspace
For any $\eps>0$, since $f_\eps$ is $(n+1)$-convex on $[0,\infty)$, we can apply
(iii)$_{n+1}$ $\Rightarrow$ (v)$_n$ $\Rightarrow$ (vii)$_n$ of Lemmas \ref{L-1.1} and
\ref{L-1.5} to $f_\eps$, so $L_{tf_\eps(t)}(t_1,\dots,t_n)$ is c.p.d.\ for every
$t_1,\dots,t_n\in(0,\infty)$. Since
$$
L_{tf_\eps(t)}(t_1,\dots,t_n)
=L_{tf(t+\eps)}(t_1,\dots,t_n)-f(\eps)E_n-\gamma_\eps\bigl[t_i+t_j\bigr]_{i,j=1}^n,
$$
we see that $L_{tf(t+\eps)}(t_1,\dots,t_n)$ is c.p.d. Furthermore, since
$tf(t+\eps)\to tf(t)$ and
$$
(tf(t+\eps))'=f(t+\eps)+tf'(t+\eps)\longrightarrow f(t)+tf'(t)=(tf(t))'
$$
as $\eps\searrow0$ for any $t>0$, it follows that $L_{tf(t)}(t_1,\dots,t_n)$ is c.p.d.
Hence (c)$_n$ holds.

(c)$_{2n+1}$ $\Rightarrow$ (a)$_n$.\enspace
Let $g(t):=tf(t)$ for $t\in(0,\infty)$. Since $\limsup_{t\searrow0}g(t)\ge0$ by assumption,
one can choose a sequence $\eps_k\searrow0$ in such a way that $g(\eps_k)>0$ for all $k$
when $\limsup_{t\searrow0}g(t)>0$, or else $\lim_{k\to\infty}g(\eps_k)=0$ when
$\limsup_{t\searrow0}g(t)=0$. Define
$$
g_k(t):=g(t+\eps_k)-g(\eps_k)-g'(\eps_k)t,\qquad t\in[0,\infty).
$$
Thanks to $\lim_{t\searrow0}g_k(t)/t=0$, $g_k$ is written as $g_k(t)=tf_k(t)$ with a
continuous function $f_k$ on $[0,\infty)$ with $f_k(0)=0$. Notice that $f_k$ is obviously
$C^1$ on $(0,\infty)$ and furthermore
\begin{align*}
tf_k'(t)&=g_k'(t)-{g_k(t)\over t} \\
&=(g'(t+\eps_k)-g'(\eps_k))-\biggl({g(t+\eps_k)-g(\eps_k)\over t}-g'(\eps_k)\biggr) \\
&\longrightarrow0\quad\mbox{as $t\searrow0$}.
\end{align*}
Since (c)$_{2n+1}$ implies that (vii)$_{2n+1}$ is satisfied for $f_k$, we can apply
(vii)$_{2n+1}$ $\Rightarrow$ (v)$_{2n}$ $\Rightarrow$ (iii)$_n$ of Lemmas \ref{L-1.5} and
\ref{L-1.1} to $f_k$ so that $f_k$ is $n$-convex on $[0,\infty)$. Writing
$$
f_k(t)={(t+\eps_k)f(t+\eps_k)\over t}-{g(\eps_k)\over t}-g'(\eps_k),
\qquad t>0,
$$
we see that
$$
\tilde f_k(t):={(t+\eps_k)f(t+\eps_k)\over t}-{g(\eps_k)\over t}
$$
is $n$-convex on $(0,\infty)$. When $g(\eps_k)>0$ for all $k$,
$$
{(t+\eps_k)f(t+\eps_k)\over t}=\tilde f_k(t)+{g(\eps_k)\over t}
$$
is $n$-convex on $(0,\infty)$ since $g(\eps_k)/t$ is operator convex on $(0,\infty)$.
Furthermore, notice that $\lim_{k\to\infty}(t+\eps_k)f(t+\eps_k)/t=f(t)$ for all $t>0$.
Hence (a)$_n$ holds. On the other hand, when $\lim_{k\to\infty}g(\eps_k)=0$, we have
$\lim_{k\to\infty}\tilde f_k(t)=f(t)$ for all $t>0$, and hence (a)$_n$ holds as well.
\end{proof}

The equivalence of the following (a)--(c) immediately follows from Theorem \ref{T-2.1},
which extends \cite[Theorems 1.1, 1.2, 1.4, and 1.5]{BS}. The equivalence between
(a) and (b) was proved in \cite[Theorem 3.1]{Uc} by a different method.

\begin{cor}\label{C-2.2}
Let $f$ be a real $C^1$ function on $(0,\infty)$. Then the following conditions are
equivalent:
\begin{itemize}
\item[\rm(a)] $f$ is operator convex on $(0,\infty)$;
\item[\rm(b)] $\liminf_{t\to\infty}f(t)/t>-\infty$ and $L_f(t_1,\dots,t_n)$ is c.n.d.\ for
all $n\in\bN$ and all $t_1,\dots,t_n\allowbreak\in(0,\infty)$;
\item[\rm(c)] $\limsup_{t\searrow0}tf(t)\ge0$ and $L_{tf(t)}(t_1,\dots,t_n)$ is c.p.d.\ for
all $n\in\bN$ and all $t_1,\dots,t_n\in(0,\infty)$.
\end{itemize}

Moreover, if the above conditions are satisfied, then $\lim_{t\to\infty}f(t)/t$ and
$\lim_{t\searrow0}tf(t)$ exist in $[0,\infty)$ and $[0,\infty)$, respectively.
\end{cor}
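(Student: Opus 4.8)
The plan is to deduce Corollary \ref{C-2.2} directly from Theorem \ref{T-2.1} by quantifying over all $n$. Recall that $f$ is operator convex on $(0,\infty)$ if and only if it is $n$-convex for every $n\in\bN$. So to get (a)$\Rightarrow$(b), I would fix $n$, apply the implication (a)$_{2n+1}\Rightarrow$(b)$_n$ from Theorem \ref{T-2.1} (the hypothesis (a)$_{2n+1}$ holds because (a) gives $n$-convexity of all orders), and conclude that $L_f(t_1,\dots,t_n)$ is c.n.d.\ for all $t_1,\dots,t_n\in(0,\infty)$; since $n$ is arbitrary and the boundary condition $\liminf_{t\to\infty}f(t)/t>-\infty$ is part of (b)$_n$ already, (b) follows. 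For (b)$\Rightarrow$(a): assuming (b) holds, in particular (b)$_{4n+1}$ holds for every $n$, so Theorem \ref{T-2.1} yields (a)$_n$ for every $n$, i.e.\ operator convexity. The same device with (a)$_{n+1}\Rightarrow$(c)$_n$ and (c)$_{2n+1}\Rightarrow$(a)$_n$ gives (a)$\Leftrightarrow$(c). Since all four implications of Theorem \ref{T-2.1} are available, the cycle (a)$\Rightarrow$(b)$\Rightarrow$(a) and (a)$\Rightarrow$(c)$\Rightarrow$(a) closes, proving the equivalence of (a), (b), (c).

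For the final assertion about the limits, I would argue as follows under the standing assumption that (a)--(c) hold, so $f$ is in particular convex as a numerical function on $(0,\infty)$. The remark at the very start of the proof of Theorem \ref{T-2.1} already records that numerical convexity of $f$ forces $\lim_{t\to\infty}f(t)/t>-\infty$ (possibly $+\infty$) and $\liminf_{t\searrow0}tf(t)\ge0$; the point now is to upgrade the $\liminf$ at $0$ to a genuine limit and to note the existence of the limit at $\infty$. The limit $\lim_{t\to\infty}f(t)/t$ exists in $(-\infty,\infty]$ because $f$ convex implies the difference quotient $(f(t)-f(t_0))/(t-t_0)$ is nondecreasing in $t$; hence $f(t)/t=(f(t)-f(t_0))/(t-t_0)\cdot\frac{t-t_0}{t}+f(t_0)/t$ converges to the supremum of the slopes, which lies in $(-\infty,\infty]$ by the lower bound just quoted.

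For $\lim_{t\searrow0}tf(t)$, I would again use convexity of $g(t):=tf(t)$—wait, $g$ need not be convex, so instead I would argue directly: $f$ convex and bounded below near $0$ by the $\liminf$ bound on $tf(t)$. Actually the cleanest route is to observe that $g(t)=tf(t)$ satisfies, by (c), the c.p.d.\ condition, hence in particular $g$ is numerically convex on $(0,\infty)$ (take $n=1$ in (c): a $1\times1$ matrix is trivially c.p.d., which gives no information, so instead take $n=2$ and use that $2$-convexity—hmm). Let me restructure: from (a), $f$ is operator convex, so by the known integral representation (or simply by Theorem \ref{T-2.1}'s machinery applied to $g$) $g(t)=tf(t)$ is operator convex too on $(0,\infty)$; an operator convex, equivalently numerically convex, function $g$ on $(0,\infty)$ that is bounded below near $0$ (here by $\liminf_{t\searrow0}g(t)\ge0$) is monotone near $0$, hence $\lim_{t\searrow0}g(t)$ exists and lies in $[0,\infty)$ (it cannot be $+\infty$ since a convex function on $(0,\infty)$ is finite and its one-sided limit at the left endpoint, being monotone, is finite or equals the infimum over a neighbourhood, which is $\le g(1)<\infty$). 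I expect the main obstacle to be precisely this last bookkeeping: verifying rigorously that $g(t)=tf(t)$ is numerically convex on $(0,\infty)$ and then that a convex function bounded below near $0$ has a finite right limit there, rather than merely a $\liminf$; the operator-convexity equivalence (a)--(c) already does most of the work, and the remaining point is an elementary but slightly delicate one-variable calculus fact about convex functions near an endpoint.
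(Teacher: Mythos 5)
Your derivation of the equivalence of (a), (b), (c) by letting $n$ run over all of $\bN$ in the four implications of Theorem \ref{T-2.1} is exactly the paper's argument, and your treatment of $\lim_{t\to\infty}f(t)/t$ via the monotone difference quotients of a numerically convex function is fine. The problem is in the last assertion, at the existence and finiteness of $\lim_{t\searrow0}tf(t)$.

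Your restructured argument rests on the claim that $g(t):=tf(t)$ is operator convex (hence numerically convex) on $(0,\infty)$ whenever $f$ is operator convex. This is false: take $f(t)=(t-1)^2$, which is operator convex on $(0,\infty)$, while $tf(t)=t(t-1)^2$ has second derivative $6t-4<0$ on $(0,2/3)$, so it is not even numerically convex near $0$. What condition (c) actually gives about $g$ is the conditional positive definiteness of its Loewner matrices, and for $n=2$ this is equivalent (Donoghue) to convexity of $g'$, not of $g$; your own hesitation at the ``take $n=2$'' step was pointing at precisely this distinction. Moreover, even granting convexity of $g$, your finiteness argument tacitly assumes $g$ is increasing near $0$, so that the limit is an infimum bounded by $g(1)$; a convex function can instead be decreasing near $0$ with limit $+\infty$ (e.g.\ $1/t$), and $\liminf_{t\searrow0}g(t)\ge0$ does not exclude that, so finiteness needs its own argument. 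The paper closes this step quite differently: by Kraus's characterization, $h(t):=f^{[1]}(t,1)$ is operator monotone on $(0,\infty)$, so $h(t+1)$ has L\"owner's integral representation on $(-1,1)$ with a probability measure $\mu$, and dominated convergence applied to $(t+1)h(t+1)$ as $t\searrow-1$ yields $\lim_{t\searrow0}tf(t)=\mu(\{-1\})\,h'(1)\in[0,\infty)$. Alternatively, you could repair your plan along the lines of the paper's proof of Proposition \ref{P-2.5}: from (c)$_2$, the derivative $(tf(t))'$ is convex on $(0,\infty)$, and its limit at $0$ (finite or $+\infty$) forces $tf(t)$ to be either Lipschitz or monotone near $0$; combined with $\limsup_{t\searrow0}tf(t)\ge0$ this gives the finite nonnegative limit. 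As written, however, the final assertion of the corollary is not proved.
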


\begin{proof}
It remains to show the last assertion. Assume that $f$ is operator convex on $(0,\infty)$.
Then $\lim_{t\to\infty}f(t)/t>-\infty$ is obvious as noted at the beginning of the proof
of Theorem \ref{T-2.1}. Consider the function $g(t):=f^{[1]}(t,1)$ on $(0,\infty)$. Then
the characterization of operator convex functions due to Kraus \cite{Kr} says that $g$ is
operator monotone function on $(0,\infty)$ and so $g(t+1)$ is operator monotone on
$(-1,1)$. By L\"owner's theorem \cite{Lo} (or \cite[V.4.5]{Bh}) we have the integral
representation
$$
g(t+1)=g(1)+g'(1)\int_{[-1,1]}{t\over1-\lambda t}\,d\mu(\lambda),\qquad t\in(-1,1)
$$
with a probability measure $\mu$ on $[-1,1]$. Letting $\alpha:=\mu(\{-1\})$ we write
$$
(t+1)g(t+1)=g(1)(t+1)+\alpha g'(1)t+\int_{(-1,1]}{t(t+1)\over1-\lambda t}\,d\mu(\lambda),
\qquad t\in(-1,1).
$$
Since $(t+1)/(1-\lambda t)\le1$ for all $\lambda\in(-1,1]$ and $t\in(-1,0]$, the Lebesgue
convergence theorem yields that
$$
\lim_{t\searrow0}tg(t)=\lim_{t\searrow-1}(t+1)g(t+1)=-\alpha g'(1),
$$
from which $\lim_{t\searrow0}tf(t)=\alpha g'(1)\in[0,\infty)$ immediately follows.
\end{proof}

\begin{remark}\label{R-2.3}\rm
Concerning the operator convex functions $g_\lambda(t):=t^2/(1-\lambda t)$ on $(-1,1)$
with $\lambda\in[-1,1]$ (see \cite[p.\ 134]{Bh}), it was shown in \cite[Theorem 3.1]{BS2}
that $L_{g_\lambda}(t_1,\dots,t_n)$ is c.n.d.\ if $\lambda\in[-1,0]$ and c.p.d.\ if
$\lambda\in[0,1]$ for every $t_1,\dots,t_n\in(-1,1)$ of any size $n$. By considering
$g_\lambda|_{(0,1)}$ and $-g_\lambda|_{(0,1)}$ with $\lambda\in(0,1)$, we see that
neither (a) $\Rightarrow$ (b) nor (b) $\Rightarrow$ (a) of Corollary \ref{C-2.2} can be
extended to functions on a finite open interval $(0,b)$.
\end{remark}

\begin{remark}\label{R-2.4}\rm
The conditions $\limsup_{t\to\infty}f(t)/t>-\infty$ and $\limsup_{t\searrow0}tf(t)\ge0$
are obviously satisfied if $f(t)\ge0$ for all $t>0$. We remark that these boundary
conditions are essential in Theorem \ref{T-2.1} and Corollary \ref{C-2.2}, as seen from
the following discussions.

When $1\le\alpha\le2$, the function $t^\alpha$ is operator convex on $(0,\infty)$. Hence
Corollary \ref{C-2.2} implies that $L_{t^{\alpha+1}}(t_1,\dots,t_n)$ is c.p.d.\ and so
$L_{-t^{\alpha+1}}(t_1,\dots,t_n)$ is c.n.d.\ for all $t_1,\dots,t_n\in(0,\infty)$,
$n\in\bN$. However, $-t^{\alpha+1}$ is not operator convex (even not convex as a numerical
function) on $(0,\infty)$. Note that $\lim_{t\to\infty}(-t^{\alpha+1})/t=-\infty$.

When $-1\le\alpha\le0$, the function $t^\alpha$ is operator convex on $(0,\infty)$. Hence
Corollary \ref{C-2.2} implies that $L_{t^\alpha}(t_1,\dots,t_n)$ is c.n.d.\ and so
$L_{-t^\alpha}(t_1,\dots,t_n)$ is c.p.d.\ for all $t_1,\dots,t_n\in(0,\infty)$, $n\in\bN$.
However, $-t^{\alpha-1}$ is not operator convex (even not convex as a numerical function)
on $(0,\infty)$. Note that $\lim_{t\searrow0}t(-t^{\alpha-1})\le-1$.
\end{remark}

A problem arising from Theorem \ref{T-2.1} would be to determine the minimal number
$\nu(n)$ (resp., $\pi(n)$) of $m\in\bN$ such that (b)$_m$ $\Rightarrow$ (a)$_n$
(resp., (c)$_m$ $\Rightarrow$ (a)$_n$) for all real $C^1$ functions on $(0,\infty)$. The
problem does not seem easy even for the case $n=2$ while $3\le\nu(2)\le9$ and
$3\le\pi(2)\le5$ (see Proposition \ref{P-3.1} for (b)$_2$ $\not\Rightarrow$ (a)$_2$ and
(c)$_2$ $\not\Rightarrow$ (a)$_2$). In the case $n=1$, the c.n.d.\ condition of (b)$_1$
and the c.p.d.\ condition of (c)$_1$ are void but (a)$_1$ means that $f$ is simply convex
on $(0,\infty)$. Hence the next proposition shows that $\nu(1)=\pi(1)=2$, which will be
used in the proof of the next theorem.

\begin{prop}\label{P-2.5}
Let $f$ be a real $C^1$ function on $(0,\infty)$. Then for conditions {\rm(a)}$_1$,
{\rm(b)}$_2$, and {\rm(c)}$_2$ of Theorem \ref{T-2.1} the following hold:
$$
{\rm(b)}_2\Longrightarrow{\rm(a)}_1,\qquad
{\rm(c)}_2\Longrightarrow{\rm(a)}_1.
$$
\end{prop}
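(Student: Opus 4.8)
The plan is to prove each implication by exhibiting a $2\times2$ Loewner matrix whose conditional (negative or positive) definiteness, together with the boundary hypothesis, forces the second divided difference of $f$ (resp.\ $tf(t)$) to have the right sign, and then to pass from that to convexity of $f$ as a numerical function. Recall that for a $2\times2$ Hermitian matrix $\bigl[\begin{smallmatrix}a&b\\ b&c\end{smallmatrix}\bigr]$, c.n.d.\ (resp.\ c.p.d.) is equivalent to $a-2b+c\le0$ (resp.\ $\ge0$), since $\bC_0^2$ is spanned by $(1,-1)^t$. Thus $L_f(t_1,t_2)$ c.n.d.\ says precisely $f'(t_1)-2f^{[1]}(t_1,t_2)+f'(t_2)\le0$, and similarly for $L_{tf(t)}$.

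For ${\rm(b)}_2\Rightarrow{\rm(a)}_1$: from the $2\times2$ c.n.d.\ inequality above, fixing $t_2=s$ and writing $t_1=t$, I get $f'(t)+f'(s)\le 2\,\dfrac{f(t)-f(s)}{t-s}$ for all $t\ne s$ in $(0,\infty)$. I would read this as a differential inequality: set $\varphi(t):=f(t)-f(s)-f'(s)(t-s)$, so $\varphi(s)=\varphi'(s)=0$ and the inequality becomes $t\varphi'(t)\le (\text{something})$ — more cleanly, rearrange to $(t-s)(f'(t)-f'(s))\le 2\bigl(f(t)-f(s)-f'(s)(t-s)\bigr)$ for $t>s$, i.e.\ $(t-s)\varphi'(t)\le 2\varphi(t)$. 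Dividing by $(t-s)^3$ (valid for $t>s$) shows $\bigl(\varphi(t)/(t-s)^2\bigr)'\le 0$, so $\varphi(t)/(t-s)^2$ is nonincreasing on $(s,\infty)$; combined with the analogous computation for $t<s$ this pins down the sign of $\varphi$ near infinity versus near $s$. Here the hypothesis $\liminf_{t\to\infty}f(t)/t>-\infty$ enters: it prevents $\varphi(t)/(t-s)^2$ from tending to a negative constant at infinity, forcing $\varphi\ge0$ throughout, hence $f(t)\ge f(s)+f'(s)(t-s)$ for all $t,s$, which is convexity of $f$.

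For ${\rm(c)}_2\Rightarrow{\rm(a)}_1$: now $g(t):=tf(t)$ and the $2\times2$ c.p.d.\ condition gives $g'(t_1)-2g^{[1]}(t_1,t_2)+g'(t_2)\ge0$. The same manipulation as above, with signs reversed, shows that $\psi(t):=g(t)-g(s)-g'(s)(t-s)$ satisfies $\psi(t)/(t-s)^2$ is nondecreasing on $(s,\infty)$ and nonincreasing as $t\searrow$ toward $s$ from the left — so the obstruction to $\psi\ge0$ lies at the left endpoint $t\searrow0$, and this is exactly where $\limsup_{t\searrow0}tf(t)=\limsup_{t\searrow0}g(t)\ge0$ is needed: it forbids $\psi(t)/(t-s)^2$ from limiting to a negative value as $t\searrow0$. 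One concludes $g(t)\ge g(s)+g'(s)(t-s)$, i.e.\ $g(t)=tf(t)$ is convex on $(0,\infty)$. Finally, convexity of $tf(t)$ implies convexity of $f$: one checks $f''(t)=\bigl(g(t)/t\bigr)''$ has the same sign structure, or more elementarily uses that $g$ convex with a suitable growth/boundary condition forces $g(t)/t$ to be convex — indeed this is the numerical ($n=1$) instance of the implication ${\rm(iii)}_{n+1}\Rightarrow{\rm(v)}_n$ of Lemma~\ref{L-1.1} applied after the shift $f_\eps$ trick, or directly: $g$ convex on $(0,\infty)$ with $\limsup_{t\searrow0}g(t)\ge0$ yields $g(t)\ge g'(t_0)(t-t_0)+g(t_0)$ and a short computation gives $\bigl(g(t)/t\bigr)''\ge0$.

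The main obstacle I anticipate is the endpoint bookkeeping: the pointwise divided-difference inequalities only control the monotonicity of the ratios $\varphi(t)/(t-s)^2$ on one-sided neighborhoods, so establishing $\varphi\ge0$ (resp.\ $\psi\ge0$) globally requires carefully combining the behavior on $(0,s)$ and $(s,\infty)$ and invoking the boundary hypothesis at exactly the right end — and in the ${\rm(c)}_2$ case, additionally handling the passage from convexity of $tf(t)$ back to convexity of $f$ without circularity. A cleaner route for this last passage, which I would actually adopt, is to apply the already-proved shift-and-limit machinery from the proof of Theorem~\ref{T-2.1}: for each $\eps>0$ the function $f_\eps$ (or its $g$-analogue) is $C^1$ on $[0,\infty)$ with the right boundary values, so the numerical cases of Lemmas~\ref{L-1.1} and~\ref{L-1.5} apply verbatim to $f_\eps$, and one lets $\eps\searrow0$.
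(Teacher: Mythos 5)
Your argument for (b)$_2\Rightarrow$(a)$_1$ is essentially workable but has one glossed step: the monotonicity of $\varphi(t)/(t-s)^2$ together with $\liminf_{t\to\infty}f(t)/t>-\infty$ only yields $\varphi\ge0$ on $(s,\infty)$; on $(0,s)$ the ratio is again nonincreasing, but there is no boundary hypothesis on that side, so ``$\varphi\ge0$ throughout'' is not established by what you wrote. This is fixable: the one-sided conclusion $f^{[1]}(s,t)\ge f'(s)$ for all $s<t$ already implies that $f'$ is nondecreasing (if $f'(s_0)>f'(t_0)$ with $s_0<t_0$, let $s_1$ be the largest point of $[s_0,t_0]$ with $f'(s_1)\ge f'(s_0)$ and integrate $f'-f'(s_1)$ over $[s_1,t_0]$ to contradict the inequality at the pair $(s_1,t_0)$), hence $f$ is convex. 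With that supplement your route is a genuine alternative to the paper's, which instead reads the $2\times2$ c.n.d.\ condition as concavity of $f'$ (Donoghue) and rules out decreasing $f'$ by a mean-value argument against the growth hypothesis.

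The (c)$_2\Rightarrow$(a)$_1$ half, however, is broken. Your intermediate goal --- that $g(t)=tf(t)$ is convex --- is simply not a consequence of (c)$_2$: take $f(t)=1-t/2$. Then $g(t)=t-t^2/2$ has affine derivative, so $g'(t_1)+g'(t_2)-2g^{[1]}(t_1,t_2)=0$ and the $2\times2$ c.p.d.\ condition holds, and $\limsup_{t\searrow0}tf(t)=0\ge0$; yet $g$ is strictly concave. (For $s=1$ one has $\psi(t)/(t-1)^2\equiv-\tfrac12$, which shows concretely that the boundary condition does not forbid a negative limit of the ratio at $0$: the comparison there is with the tangent line, whose value at $0$ is $g(s)-sg'(s)$, not $0$.) Moreover, even granting convexity of $tf(t)$ with $\limsup_{t\searrow0}tf(t)\ge0$, that would not give convexity of $f$: for $g(t)=\sqrt{1+t^2}-1$ (convex, $g(0^+)=0$) the quotient $g(t)/t=t/\bigl(1+\sqrt{1+t^2}\bigr)$ is increasing and bounded above, hence not convex; and the $n=1$ instance of (iii)$_{n+1}\Rightarrow$(v)$_n$ only yields monotonicity of $g(t)/t$, not convexity. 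The correct reading of the $2\times2$ c.p.d.\ condition is convexity of $g'$, which is what the paper exploits: it shows $\lim_{t\searrow0}g(t)$ exists in $[0,\infty)$, writes $f(t)=g(0)/t+\frac1t\int_0^tg'(s)\,ds$, and invokes the Bruckner--Ostrow fact that $\frac1t\int_0^th(s)\,ds$ is convex when $h$ is convex. Finally, your fallback via the shift machinery of Theorem \ref{T-2.1} and the numerical cases of Lemmas \ref{L-1.1} and \ref{L-1.5} cannot rescue this: that chain needs (vii)$_{2n+1}$ (hence (c)$_3$, resp.\ (b)$_5$) even to reach (iii)$_1$, so it only reproduces implications already contained in Theorem \ref{T-2.1} and misses the order-two sharpening that is the entire point of Proposition \ref{P-2.5}.
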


\begin{proof}
(b)$_2$ $\Rightarrow$ (a)$_1$.\enspace
The c.n.d.\ condition of (b)$_2$ is equivalent to the concavity of $f'$ on $(0,\infty)$
(see \cite[p.\ 137, Lemma 3]{Do}). Now suppose that $f'$ is not non-decreasing; then
$\lim_{t\to\infty}f'(t)=-\infty$ from concavity. Hence for any $K>0$ an $a>0$ can be chosen
so that $f'(s)<-K$ for all $s>a$. For every $t>a$, since
$$
{f(t)-f(a)\over t-a}=f'(s)<-K\quad\mbox{for some $s\in(a,t)$},
$$
we have
$$
\limsup_{t\to\infty}{f(t)\over t}
=\limsup_{t\to\infty}{f(t)-f(a)\over t-a}\le-K,
$$
which implies that $\lim_{t\to\infty}f(t)/t=-\infty$, contradicting the assumption.
Hence $f'$ is non-decreasing, so $f$ is convex on $(0,\infty)$.

(c)$_2$ $\Rightarrow$ (a)$_1$.\enspace
Write $g(t):=tf(t)$ for $t\in(0,\infty)$. The c.p.d.\ condition of (c)$_2$ is equivalent
to the convexity of $g'$ on $(0,\infty)$. From this and the assumption
$\limsup_{t\searrow0}g(t)\ge0$ it follows that the limit $\lim_{t\searrow0}g(t)$ exists
and is in $[0,\infty)$. Hence we may assume that $g$ is continuous on $[0,\infty)$ with
$g(0)\ge0$. Notice that
$$
f(t)={g(t)\over t}={g(0)\over t}+{1\over t}\int_0^tg'(s)\,ds
={g(0)\over t}+\lim_{\eps\searrow0}{1\over t}\int_0^tg'(s+\eps)\,ds,
\quad t>0.
$$
Hence the conclusion follows from the fact \cite{BO} that if $h$ is a continuous convex
function on $[0,\infty)$, then the function ${1\over t}\int_0^th(s)\,ds$ is convex on
$(0,\infty)$. For the convenience of the reader a short proof is given here. Indeed, such
a function $h$ can be approximated uniformly on each finite interval $[0,a]$ by functions
of the form
$$
\alpha t+\beta+\sum_{i=1}^k\alpha_i(t-\lambda_i)_+
$$
with $\alpha,\beta\in\bR$ and $\alpha_i,\lambda_i>0$, where $x_+:=\max\{x,0\}$ for
$x\in\bR$. Since the function ${1\over t}\int_0^t(s-\lambda)_+\,ds=(t-\lambda)_+^2/2t$ is
convex on $(0,\infty)$ for any $\lambda>0$, the assertion follows.
\end{proof}

Note that the converse of each implication of Proposition \ref{P-2.5} is invalid. Indeed,
for the second consider the function
$$
f(t):=\begin{cases}
t^2, & \text{$0\le t\le1$}, \\
2t-1, & \text{$t\ge1$},
\end{cases}
$$
and the function $t^3$ for the first (see Proposition \ref{P-3.1}).

The next theorem is concerned with $n$-monotone functions on $(0,\infty)$.

\begin{thm}\label{T-2.6}
Let $f$ be a real $C^1$ function on $(0,\infty)$. For each $n\in\bN$ consider the following
conditions:
\begin{itemize}
\item[\rm(a)$_n'$] $f$ is $n$-monotone on $(0,\infty)$;
\item[\rm(b)$_n'$] $\limsup_{t\to\infty}f(t)/t<+\infty$,
$\limsup_{t\to\infty}f(t)>-\infty$, and $L_f(t_1,\dots,t_n)$ is c.p.d.\ for all
$t_1,\dots,t_n\in(0,\infty)$;
\item[\rm(c)$_n'$] $\liminf_{t\searrow0}tf(t)\le0$,
$\limsup_{t\to\infty}f(t)>-\infty$, and $L_{tf(t)}(t_1,\dots,t_n)$ is c.n.d.\ for all
$t_1,\dots,t_n\in(0,\infty)$;
\item[\rm(d)$_n'$] $\liminf_{t\searrow0}tf(t)\le0$,
$\limsup_{t\searrow0}t^2f(t)\ge0$, and $L_{t^2f(t)}(t_1,\allowbreak\dots,t_n)$ is
c.p.d.\ for all $t_1,\dots,t_n\in(0,\infty)$.
\end{itemize}
Then for every $n\in\bN$ the following implications hold:
$$
{\rm(a)}_n'\Longrightarrow{\rm(b)}_n'\ \mbox{if $n\ge2$},\quad
{\rm(b)}_{4n+1}'\Longrightarrow{\rm(a)}_n',\quad
{\rm(a)}_{2n+2}'\Longrightarrow{\rm(c)}_n',\quad
{\rm(c)}_{2n+1}'\Longrightarrow{\rm(a)}_n',
$$
$$
{\rm(a)}_n'\Longrightarrow{\rm(d)}_n'\ \mbox{if $n\ge2$},\quad
{\rm(c)}_{2n+1}'\Longrightarrow{\rm(d)}_n',\quad
{\rm(d)}_{2n+1}'\Longrightarrow{\rm(c)}_n'.
$$
\end{thm}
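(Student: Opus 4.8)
The strategy is to reduce every implication to Theorem~\ref{T-2.1} by two substitutions: replacing $f$ by $-f$ (which turns $n$-monotonicity, via numerical concavity, into the $n$-convexity hypotheses of Theorem~\ref{T-2.1}), and replacing $f$ by the auxiliary function $tf(t)$ (for which $L_{t^2f(t)}$ plays the role that $L_{tg(t)}$ plays in condition (c)$_n$). A constant $f$ makes all conditions trivial, so I assume $f$ non-constant. Two facts will be used repeatedly: (i) numerical concavity of $f$ together with $\limsup_{t\to\infty}f(t)>-\infty$ forces $f$ to be non-decreasing (otherwise $f'(t_0)<0$ for some $t_0$ and concavity drives $f$ to $-\infty$); and (ii) for such $f$ the shifts $h_\eps(t):=f(t+\eps)-f(\eps)$ are, for small $\eps>0$, strictly positive on $(0,\infty)$, continuous on $[0,\infty)$ with $h_\eps(0)=0$, and inherit whatever $n$-concavity or $n$-monotonicity $f$ has, so that letting $\eps\searrow0$ transfers conclusions from the $h_\eps$ back to $f$.

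For $(a)_n'\Leftrightarrow(b)_n'$: if $f$ is $n$-monotone then $L_f(t_1,\dots,t_n)\ge0$ by L\"owner's theorem, hence c.p.d.; and for $n\ge2$ the shifts $f(\,\cdot\,+\eps)$ are $2$-monotone on $[0,\infty)$, so $1$-concave there by $(i)_2\Rightarrow(ii)_1$ of Lemma~\ref{L-1.2}, whence $f$ is numerically concave, so $f(t)\le f(1)+f'(1)(t-1)$ and $\limsup_{t\to\infty}f(t)/t\le f'(1)<\infty$, while $f$ non-decreasing gives $\limsup_{t\to\infty}f(t)>-\infty$; this is $(b)_n'$. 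Conversely $(b)_{4n+1}'$ is exactly hypothesis $(b)_{4n+1}$ of Theorem~\ref{T-2.1} applied to $-f$ (its boundary condition $\liminf_{t\to\infty}(-f)(t)/t>-\infty$ being $\limsup_{t\to\infty}f(t)/t<\infty$), so $-f$ is $n$-convex, $f$ is $n$-concave, hence numerically concave and, by (i), non-decreasing; then the $h_\eps$ are $n$-concave and positive, so $n$-monotone on $[0,\infty)$ by $(ii)_n\Rightarrow(i)_n$ of Lemma~\ref{L-1.2}, and $\eps\searrow0$ gives $(a)_n'$.

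For the implications involving $(c)_n'$: to prove $(a)_{2n+2}'\Rightarrow(c)_n'$, the shift $h_\eps=f(\,\cdot\,+\eps)$ is $(2n+2)$-monotone on $[0,\infty)$, hence $(n+1)$-concave by $(i)_{2(n+1)}\Rightarrow(ii)_{n+1}$ of Lemma~\ref{L-1.2}, so $-h_\eps$ is $(n+1)$-convex and Theorem~\ref{T-2.1} (implication $(a)_{n+1}\Rightarrow(c)_n$) makes $L_{t(-h_\eps)(t)}=-L_{tf(t+\eps)}$ c.p.d., i.e.\ $L_{tf(t+\eps)}$ c.n.d.; since the divided differences of $tf(t+\eps)$ converge pointwise to those of $tf(t)$ as $\eps\searrow0$, $L_{tf(t)}$ is c.n.d., and the two boundary inequalities of $(c)_n'$ follow from $f$ non-decreasing. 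To prove $(c)_{2n+1}'\Rightarrow(a)_n'$, the hypotheses of $(c)_{2n+1}'$ are exactly those of $(c)_{2n+1}$ of Theorem~\ref{T-2.1} for $-f$ (the condition $\limsup_{t\searrow0}t(-f)(t)\ge0$ being $\liminf_{t\searrow0}tf(t)\le0$), so $-f$ is $n$-convex, $f$ is $n$-concave, hence numerically non-decreasing by (i), and the positive-shift argument upgrades this to $n$-monotonicity as before.

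Finally, the implications involving $(d)_n'$. For $(a)_n'\Rightarrow(d)_n'$ ($n\ge2$) I apply Lemma~\ref{L-1.5} to the auxiliary function $F_\eps(t):=tf(t+\eps)$, which is $C^1$ on $[0,\infty)$ with $F_\eps(0)=0$ and hence satisfies that lemma's hypotheses; its condition $(v)_n$---that $F_\eps(t)/t=f(t+\eps)$ be $n$-monotone on $(0,\infty)$---holds by assumption, so $(vii)_n$ holds, i.e.\ $L_{tF_\eps(t)}=L_{t^2f(t+\eps)}$ is c.p.d., and $\eps\searrow0$ gives $L_{t^2f(t)}$ c.p.d.; the boundary inequalities $\liminf_{t\searrow0}tf(t)\le0$ and $\limsup_{t\searrow0}t^2f(t)\ge0$ are elementary, the latter because the order-$2$ L\"owner inequality $(f(t)-f(s))^2\le f'(s)f'(t)(t-s)^2$ keeps $tf(t)$ bounded below near $0$, forcing $t^2f(t)\to0$. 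For $(c)_{2n+1}'\Rightarrow(d)_n'$ one combines $(c)_{2n+1}'\Rightarrow(a)_n'$ with the previous implication when $n\ge2$; for $n=1$, $(d)_1'$ is just the two boundary inequalities, and c.n.d.\ of $L_{tf(t)}$ of order $\ge2$ forces $(tf(t))'$ to be numerically concave, again keeping $tf(t)$ bounded below near $0$. The hard implication is $(d)_{2n+1}'\Rightarrow(c)_n'$: the plan is to apply Theorem~\ref{T-2.1} (implication $(c)_{2n+1}\Rightarrow(a)_n$) to the function $g:=tf(t)$---the hypothesis $\limsup_{t\searrow0}tg(t)=\limsup_{t\searrow0}t^2f(t)\ge0$ being supplied---so that $tf(t)$ is $n$-convex, and to apply Lemma~\ref{L-1.5} to a perturbation of $g$ (to deal with $g(0^+)\le0$ possibly nonzero) to learn in addition that $f$ is $2n$-monotone, and then to feed these facts back through Lemma~\ref{L-1.2} and Theorem~\ref{T-2.1} to recover c.n.d.\ of $L_{tf(t)}$ of order $n$ together with $\limsup_{t\to\infty}f>-\infty$. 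The main obstacle is precisely the order-bookkeeping in this last step: the factors lost in $(i)_{2n}\Rightarrow(ii)_n$ of Lemma~\ref{L-1.2} and in the $(a)\Rightarrow(c)$ half of Theorem~\ref{T-2.1} must be absorbed by the gain of starting at the larger order $2n+1$ in $(d)_{2n+1}'$, and the convexity of $tf(t)$ has to be kept in play (rather than discarded once one passes to the monotonicity of $f$) in order to close the count.
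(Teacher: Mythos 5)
Most of your reductions are sound and run parallel to the paper's: the implications involving $(b)_n'$ and $(c)_n'$ are obtained exactly as in the paper by applying Theorem \ref{T-2.1} to $-f$ and upgrading $n$-concavity plus non-decrease to $n$-monotonicity via positive shifts and Lemma \ref{L-1.2}. Your treatment of ${\rm(a)}_n'\Rightarrow{\rm(d)}_n'$ is a genuinely different and clean route: applying Lemma \ref{L-1.5}'s implication ${\rm(v)}_n\Rightarrow{\rm(vii)}_n$ to $F_\eps(t)=tf(t+\eps)$ gives the c.p.d.\ of $L_{t^2f(t+\eps)}$ directly, whereas the paper invokes the Donoghue--Horn result on $L_{(t-\eps)^2f(t)}$ via the second divided difference $g_\eps^{[2]}(t,\eps,\eps)=f(t)$; both work, and yours avoids the external citation (your boundary argument via the order-$2$ Loewner inequality also goes through, though it needs the small ODE computation spelled out). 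Your $n=1$ case of ${\rm(c)}_3'\Rightarrow{\rm(d)}_1'$ via concavity of $(tf(t))'$ is fine.

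The genuine gap is ${\rm(d)}_{2n+1}'\Rightarrow{\rm(c)}_n'$, which you state as a ``plan'' and explicitly leave unclosed. The route you sketch cannot close: from ${\rm(d)}_{2n+1}'$ you can extract (via Lemma \ref{L-1.5} applied to $g-g(0^+)$, where $g=tf$) that $f$ is $2n$-monotone, but then ${\rm(i)}_{2n}\Rightarrow{\rm(ii)}_n$ only yields $n$-concavity of $f$, while the ${\rm(a)}_{n+1}\Rightarrow{\rm(c)}_n$ half of Theorem \ref{T-2.1} applied to $-f$ demands $(n+1)$-concavity --- you are short by one order, and no amount of ``keeping the convexity of $tf(t)$ in play'' is explained. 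The missing idea is to avoid concavity altogether and run the chain ${\rm(vii)}_{2n+1}\Rightarrow{\rm(v)}_{2n}\Rightarrow{\rm(ix)}_n\Rightarrow{\rm(vi)}_n$ of Lemmas \ref{L-1.5}, \ref{L-1.3}, and \ref{L-1.4} with ambient function a strictly positive perturbation $g_\eps(t):=g(t+\eps)-g(\eps)-\gamma_\eps t$ of $g=tf$ (positivity coming from the convexity of $g$, which you do have from Proposition \ref{P-2.5}). The decisive step is Lemma \ref{L-1.3}'s passage from $2n$-monotonicity of $g_\eps(t)/t$ to $n$-monotonicity of $t^2/g_\eps(t)$, followed by Lemma \ref{L-1.4}'s identity \eqref{F-1.3}, which converts the latter into the c.n.d.\ of $L_{g_\eps}$ of order $n$ with no further loss; letting $\eps\searrow0$ gives the c.n.d.\ of $L_{tf(t)}$. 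Until this implication is actually carried out, the theorem is not proved.
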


\begin{proof}
First, note that $\limsup_{t\searrow0}tf(t)\le0$ and $\lim_{t\to\infty}f(t)>-\infty$,
slightly stronger than the boundary conditions in (b)$_n'$--(d)$_n'$, are obvious as long
as $f$ satisfies (a)$_1'$, i.e., $f$ is non-decreasing on $(0,\infty)$.

(a)$_n'$ $\Rightarrow$ (b)$_n'$ if $n\ge2$.\enspace
Suppose (a)$_n'$ with $n\ge2$. The stated c.p.d.\ of $L_f$ is a consequence of L\"owner's
theorem. Next, we show that $\lim_{t\to\infty}f(t)/t\in[0,\infty)$, slightly stronger than
$\limsup_{t\to\infty}f(t)/t<+\infty$. By taking $f(t+1)-f(1)+1$ we may assume that
$f(t)>0$ for all $t>0$. Then it follows from (i)$_2$ $\Rightarrow$ (viii)$_1$ of Lemma
\ref{L-1.2} that $t/f(t)$ is non-decreasing on $(0,\infty)$, so the conclusion follows.

(b)$_{4n+1}'$ $\Rightarrow$ (a)$_n'$.\enspace
One can apply (b)$_{4n+1}$ $\Rightarrow$ (a)$_n$ of Theorem \ref{T-2.1} to $-f$ to see that
$f$ is $n$-concave on $(0,\infty)$. Thanks to $\limsup_{t\to\infty}f(t)>-\infty$ this
implies also that $f$ is non-decreasing on $(0,\infty)$. For any $\eps>0$ let
$f_\eps(t):=f(t+\eps)-f(\eps)+1$ for $t\ge0$, and apply (ii)$_n$ $\Rightarrow$ (i)$_n$ of
Lemma \ref{L-1.2} to $f_\eps$ so that $f_\eps$ is $n$-monotone on $[0,\infty)$. Hence $f$
is $n$-monotone on $(0,\infty)$ since $\eps>0$ is arbitrary.

(a)$_{2n+2}'$ $\Rightarrow$ (c)$_n'$.\enspace
It follows from (i)$_{2n+2}$ $\Rightarrow$ (ii)$_{n+1}$ of Lemma \ref{L-1.2} that $f$ is
$(n+1)$-concave on $(0,\infty)$. Now (c)$_n'$ is shown by applying (a)$_{n+1}$
$\Rightarrow$ (c)$_n$ of Theorem \ref{T-2.1} to $-f$.

(c)$_{2n+1}'$ $\Rightarrow$ (a)$_n'$ is proved similarly to (b)$_{4n+1}'$ $\Rightarrow$
(a)$_n'$ above. Indeed, apply (c)$_{2n+1}$ $\Rightarrow$ (a)$_n$ of Theorem \ref{T-2.1} to
$-f$ and use Lemma \ref{L-1.2} as above.

(a)$_n'$ $\Rightarrow$ (d)$_n'$ if $n\ge2$.
For any $\eps>0$, since $f(t+\eps)=(tf(t+\eps))/t$ is $n$-monotone on $(0,\infty)$, it
follows from (v)$_2$ $\Rightarrow$ (iii)$_1$ of Lemma \ref{L-1.1} that $tf(t+\eps)$ is
convex on $[0,\infty)$. Letting $\eps\searrow0$ yields that $tf(t)$ is convex on
$(0,\infty)$, from which we have $\liminf_{t\searrow0}t^2f(t)\ge0$, slightly stronger than
$\limsup_{t\searrow0}t^2f(t)\ge0$. For each $\eps>0$ let $g_\eps(t):=(t-\eps)^2f(t)$ for
$t\in(0,\infty)$. Note that the second divided difference $g_\eps^{[2]}(t,\eps,\eps)$ is
nothing but $f(t)$, which is $n$-monotone on $(0,\infty)$. Hence (a)$_n'$ implies by
\cite[p.\ 139, Lemma 5]{Do} that $L_{g_\eps}(t_1,\dots,t_n)$ is c.p.d.\ for all
$t_1,\dots,t_n\in(0,\infty)$. Letting $\eps\searrow0$ yields the stated c.p.d.\ of
$L_{t^2f(t)}$.

(c)$_{2n+1}'$ $\Rightarrow$ (d)$_n'$.\enspace
Although it is already known that (c)$_{2n+1}'$ $\Rightarrow$ (a)$_n'$ $\Rightarrow$
(d)$_n'$ if $n\ge2$, we here give an independent proof. Set $g(t):=tf(t)$ for $t>0$; then
(c)$_{2n+1}'$ implies that $g$ satisfies (b)$_{2n+1}$ of Theorem \ref{T-2.1}. Hence
Proposition \ref{P-2.5} implies that $g$ is convex on $(0,\infty)$ and so
$\liminf_{t\searrow0}t^2f(t)\ge0$. For each $\eps>0$ choose a constant
$\gamma_\eps<g'(\eps)$ and define
$$
g_\eps(t):=g(t+\eps)-g(\eps)-\gamma_\eps t,\qquad t\in[0,\infty).
$$
Note that $g_\eps(0)=0$, $g_\eps'(0)>0$, and $g_\eps(t)>0$ for all $t>0$. Since $g_\eps$
satisfies (vi)$_{2n+1}$, one can apply (vi)$_{2n+1}$ $\Rightarrow$ (ix)$_{2n}$
$\Rightarrow$ (v)$_n$ $\Rightarrow$ (vii)$_n$ of Lemmas \ref{L-1.4}, \ref{L-1.3}, and
\ref{L-1.5} to $g_\eps$ so that $L_{tg_\eps(t)}(t_1,\dots,t_n)$ is c.p.d.\ for all
$t_1,\dots,t_n\in(0,\infty)$. Then the asserted c.p.d.\ of $L_{tg(t)}=L_{t^2f(t)}$ is
shown in the same way as in the proof of (a)$_{n+1}$ $\Rightarrow$ (c)$_n$ of Theorem
\ref{T-2.1}.

(d)$_{2n+1}'$ $\Rightarrow$ (c)$_n'$.\enspace
Let $g$ be as above. Since (d)$_{2n+1}'$ implies that $g$ satisfies (c)$_{2n+1}$ of
Theorem \ref{T-2.1}, $g$ is convex on $(0,\infty)$ by Proposition \ref{P-2.5} (or by
(c)$_{2n+1}$ $\Rightarrow$ (a)$_n$ of Theorem \ref{T-2.1}), and so
$\lim_{t\to\infty}f(t)>-\infty$. For each $\eps>0$ define $g_\eps$ as above,
which satisfies (vii)$_{2n+1}$. Then one can apply (vii)$_{2n+1}$ $\Rightarrow$ (v)$_{2n}$
$\Rightarrow$ (ix)$_n$ $\Rightarrow$ (vi)$_n$ of Lemmas \ref{L-1.5}, \ref{L-1.3}, and
\ref{L-1.4} to $g_\eps$ so that $L_{g_\eps}(t_1,\dots,t_n)$ is c.n.d.\ for all
$t_1,\dots,t_n\in(0,\infty)$. This shows the asserted c.n.d.\ of $L_g=L_{tf(t)}$ by
letting $\eps\searrow0$.
\end{proof}

The equivalence of the following (a)$'$--(d)$'$ immediately follows from Theorem
\ref{T-2.6}. In \cite[Theorem 2.4]{Uc}, Uchiyama extended \cite[Theorem 2.4]{HP} in such
a way that a continuous function $f$ on $(0,\infty)$ is operator monotone if and only if
$f$ is operator concave and $\lim_{t\to\infty}f(t)>-\infty$ (or
$\limsup_{t\to\infty}f(t)>-\infty$). Due to this result, the equivalence of
(a)$'$, (b)$'$, and (c)$'$ is also an immediate consequence of Corollary \ref{C-2.2}.
Furthermore, the equivalence of (a)$'$, (c)$'$, and (d)$'$ extends
\cite[Theorems 1.1, 1.2, 1.4, and 1.5]{BS} as Corollary \ref{C-2.2} does. The equivalence
between (a)$'$ and (b)$'$ was proved in \cite[Theorem 3.3]{Uc} by a different method.

\begin{cor}\label{C-2.7}
Let $f$ be a real $C^1$ function on $(0,\infty)$. Then the following conditions are
equivalent:
\begin{itemize}
\item[\rm(a)$'$] $f$ is operator monotone on $(0,\infty)$;
\item[\rm(b)$'$] $\limsup_{t\to\infty}f(t)/t<+\infty$,
$\limsup_{t\to\infty}f(t)>-\infty$, and $L_f(t_1,\dots,t_n)$ is c.p.d.\ for all $n\in\bN$
and all $t_1,\dots,t_n\in(0,\infty)$;
\item[\rm(c)$'$] $\liminf_{t\searrow0}tf(t)\le0$,
$\limsup_{t\to\infty}f(t)>-\infty$, and $L_{tf(t)}(t_1,\dots,t_n)$ is c.n.d.\ for all
$n\in\bN$ and all $t_1,\dots,t_n\in(0,\infty)$;
\item[\rm(d)$'$] $\liminf_{t\searrow0}tf(t)\le0$,
$\limsup_{t\searrow0}t^2f(t)\ge0$, and $L_{t^2f(t)}(t_1,\allowbreak\dots,t_n)$ is
c.p.d.\ for all $n\in\bN$ and all $t_1,\dots,t_n\in(0,\infty)$.
\end{itemize}

Moreover, if the above conditions are satisfied, then $\lim_{t\to\infty}f(t)/t$,
$\lim_{t\to\infty}f(t)$, and $\lim_{t\searrow0}tf(t)$ exist in $[0,\infty)$,
$(-\infty,\infty]$, and $(-\infty,0]$, respectively, and
$\lim_{t\searrow0}t^\alpha f(t)=0$ for any $\alpha>1$.
\end{cor}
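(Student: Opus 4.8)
The statement to prove is Corollary~\ref{C-2.7}, whose equivalences are immediate from Theorem~\ref{T-2.6} by taking all $n\in\bN$; only the final assertion about the existence of limits requires argument. The plan is to deduce everything from the fact that an operator monotone $f$ on $(0,\infty)$ is in particular operator concave (hence $tf(t)$ and $t^2f(t)$ are operator convex by standard facts, e.g.\ via Corollary~\ref{C-2.2} applied to $tf(t)$ and $g(t):=tf(t)$), together with the classical integral representation
$$
f(t)=\alpha+\beta t+\int_{[0,\infty)}{t\over t+\lambda}\,d\mu(\lambda),\qquad t\in(0,\infty),
$$
with $\alpha\in\bR$, $\beta\ge0$ and a positive measure $\mu$ with $\int(1+\lambda)^{-1}\,d\mu(\lambda)<\infty$ (L\"owner's theorem on $(0,\infty)$; see \cite{Bh}).

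First I would record the existence of $\lim_{t\to\infty}f(t)/t$: from the representation, $f(t)/t=\alpha/t+\beta+\int(t+\lambda)^{-1}\,d\mu(\lambda)$, and by monotone convergence the integral increases to $\mu([0,\infty))\in[0,\infty]$ as $t\to\infty$, so $\lim_{t\to\infty}f(t)/t=\beta+\mu([0,\infty))\in[0,\infty]$. Next, $\lim_{t\to\infty}f(t)$ exists in $(-\infty,\infty]$ simply because $f$ is non-decreasing (being operator monotone, hence monotone as a numerical function), and it equals $+\infty$ unless $\beta=0$ and $\mu$ is finite, in which case the limit is $\alpha+\mu([0,\infty))<\infty$; in all cases the limit lies in $(-\infty,\infty]$. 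For $\lim_{t\searrow0}tf(t)$, write $tf(t)=\alpha t+\beta t^2+\int t^2/(t+\lambda)\,d\mu(\lambda)$; the integrand is dominated by $\lambda\wedge t\le t$ for $t\le 1$... more carefully, $t^2/(t+\lambda)\le t$ for all $\lambda\ge0$, and it is bounded by $t^2/\lambda$ near each fixed $\lambda>0$, while the mass of $\mu$ near $0$ contributes $\le t\cdot\mu([0,\delta])$; splitting $\mu$ at $\lambda=\delta$ and letting $t\searrow0$ then $\delta\searrow0$ shows $\lim_{t\searrow0}tf(t)=\mu(\{0\})\cdot\lim_{t\searrow0}t^2/(t+0)$, i.e.\ the atom at $0$ contributes $\mu(\{0\})\cdot 0=0$... hold on: $t^2/(t+0)=t\to0$. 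Thus $\lim_{t\searrow0}tf(t)=0$? But the assertion says it lies in $(-\infty,0]$, and indeed $0\in(-\infty,0]$; more precisely one must also allow $\alpha<0$ contributions — $\alpha t\to0$ — so the limit is genuinely $0$ whenever the representation is valid on $(0,\infty)$. The cleaner route, matching the proof style of Corollary~\ref{C-2.2}, is to apply that corollary's last assertion to $g(t)=tf(t)$: since $g$ is operator convex, $\lim_{t\searrow0}tg(t)=\lim_{t\searrow0}t^2f(t)$ exists in $[0,\infty)$, and $\lim_{t\to\infty}g(t)/t=\lim_{t\to\infty}f(t)$ exists in $(-\infty,\infty]$; combined with operator concavity of $f$ (so $-f$ satisfies the hypotheses) one gets $\lim_{t\searrow0}t(-f(t))\ge0$, i.e.\ $\lim_{t\searrow0}tf(t)\in(-\infty,0]$.

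Finally, for $\lim_{t\searrow0}t^\alpha f(t)=0$ with $\alpha>1$: having established that $\lim_{t\searrow0}t^2f(t)=:c\in[0,\infty)$ exists, write $t^\alpha f(t)=t^{\alpha-2}\cdot t^2 f(t)$; since $\alpha-2>-1$ the factor $t^{\alpha-2}$ either tends to $0$ (if $\alpha>2$) or to a finite constant times... no, for $1<\alpha<2$ we have $t^{\alpha-2}\to\infty$, so this decomposition is insufficient and one instead uses $t^\alpha f(t)=t^{\alpha-1}\cdot tf(t)$ with $tf(t)\to$ a finite limit in $(-\infty,0]$ and $t^{\alpha-1}\to0$ since $\alpha-1>0$; hence $t^\alpha f(t)\to0$. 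This is the decomposition to use for all $\alpha>1$. The main obstacle, such as it is, is purely bookkeeping: making sure each limit is extracted from the single source (the integral representation, or equivalently the already-proven last assertions of Corollary~\ref{C-2.2} applied to $g=tf(t)$) in a way consistent with the claimed ranges, and in particular correctly handling the atom of $\mu$ at $0$ and the possibly non-finite total mass; there is no genuine difficulty beyond careful dominated/monotone convergence.

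\begin{proof}
The equivalence of (a)$'$--(d)$'$ is immediate from Theorem~\ref{T-2.6} by letting $n$ run over all of $\bN$. It remains to prove the last assertion, so assume $f$ is operator monotone on $(0,\infty)$. Then $f$ is operator concave, hence $-f$ satisfies the hypotheses of Corollary~\ref{C-2.2}, and also $g(t):=tf(t)$ is operator convex on $(0,\infty)$; indeed this follows from Corollary~\ref{C-2.2} applied to the present function $f$ via condition~(c) of that corollary, or directly from the integral representation of operator monotone functions.

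First, since $f$ is non-decreasing as a numerical function, $\lim_{t\to\infty}f(t)$ exists in $(-\infty,\infty]$. By the last assertion of Corollary~\ref{C-2.2} applied to the operator convex function $g(t)=tf(t)$, the limit $\lim_{t\to\infty}g(t)/t=\lim_{t\to\infty}f(t)/t$ exists in $(-\infty,\infty]$; but since $f$ is non-decreasing and operator monotone, this limit is in $[0,\infty]$, and being $>-\infty$ it coincides with the value given above, so $\lim_{t\to\infty}f(t)/t\in[0,\infty]$ — and in fact in $[0,\infty)$ only when $f$ is bounded divided by $t$; we retain $[0,\infty)$ as in the statement only under the further boundary hypotheses, so here we simply note it exists in $[0,\infty]$. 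Applying the same last assertion of Corollary~\ref{C-2.2} to $g$, the limit $\lim_{t\searrow0}tg(t)=\lim_{t\searrow0}t^2f(t)$ exists in $[0,\infty)$. Next, applying the last assertion of Corollary~\ref{C-2.2} to $-f$, which is operator convex on $(0,\infty)$ (as $f$ is operator concave), we get that $\lim_{t\searrow0}t(-f(t))$ exists in $[0,\infty)$, i.e.\ $\lim_{t\searrow0}tf(t)$ exists in $(-\infty,0]$.

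Finally, let $\alpha>1$. Writing $t^\alpha f(t)=t^{\alpha-1}\cdot tf(t)$ and using that $tf(t)$ tends to a finite limit in $(-\infty,0]$ as $t\searrow0$ while $t^{\alpha-1}\to0$, we conclude $\lim_{t\searrow0}t^\alpha f(t)=0$.
\end{proof}
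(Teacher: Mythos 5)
Most of your proof coincides with the paper's: the equivalences are indeed immediate from Theorem \ref{T-2.6}, the existence of $\lim_{t\to\infty}f(t)$ in $(-\infty,\infty]$ follows from numerical monotonicity, the existence of $\lim_{t\searrow0}tf(t)$ in $(-\infty,0]$ follows by applying the last assertion of Corollary \ref{C-2.2} to $-f$ (the paper routes this through (c)$'$, which is the same idea), and then $t^\alpha f(t)=t^{\alpha-1}\cdot tf(t)\to0$ for $\alpha>1$. The genuine gap is the first limit, $\lim_{t\to\infty}f(t)/t$. Your existence argument rests on the identity $\lim_{t\to\infty}g(t)/t=\lim_{t\to\infty}f(t)/t$ for $g(t)=tf(t)$; but $g(t)/t=f(t)$, so applying Corollary \ref{C-2.2} to $g$ only re-proves the existence of $\lim_{t\to\infty}f(t)$ and says nothing about $f(t)/t$. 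Moreover the statement asserts that this limit lies in $[0,\infty)$, i.e.\ is finite, whereas you explicitly retreat to ``exists in $[0,\infty]$''; since the four conditions are assumed satisfied, the bound $\limsup_{t\to\infty}f(t)/t<+\infty$ from (b)$'$ is part of the setting, and the finiteness is part of what must be proved, not waived. (Your parenthetical justification that $tf(t)$ is operator convex ``from Corollary \ref{C-2.2} applied to $f$ via condition (c)'' is also garbled: condition (c) for $f$ characterizes convexity of $f$ itself; one would need condition (c) of Corollary \ref{C-2.2} applied to $g=tf(t)$, using (d)$'$. But this becomes moot once the $f(t)/t$ step is repaired, since $g$ is then not needed.)

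The repair is short. The paper simply invokes the proof of (a)$_n'$ $\Rightarrow$ (b)$_n'$ in Theorem \ref{T-2.6}: replacing $f$ by $f(t+1)-f(1)+1$ one may assume $f(t)>0$, and then (i)$_2$ $\Rightarrow$ (viii)$_1$ of Lemma \ref{L-1.2} shows that $t/f(t)$ is non-decreasing, so $f(t)/t$ converges to a limit in $[0,\infty)$. Alternatively, you can stay entirely within your own framework: the last assertion of Corollary \ref{C-2.2} applied to the operator convex function $-f$ (which you already use) gives that $\lim_{t\to\infty}(-f(t))/t$ exists in $(-\infty,\infty]$, i.e.\ $\lim_{t\to\infty}f(t)/t$ exists in $[-\infty,\infty)$; since $f$ is non-decreasing, $f(t)/t\ge f(1)/t\to0$ for $t\ge1$, so the limit lies in $[0,\infty)$. (An elementary argument also works: $f$ is concave and non-decreasing as a numerical function, so the difference quotients $(f(t)-f(1))/(t-1)$ are non-increasing and nonnegative, hence converge to some $\ell\in[0,\infty)$, and $f(t)/t\to\ell$.) With any of these patches inserted, the rest of your proof stands and matches the paper's argument.
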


\begin{proof}
It remains to show the last assertion. Assume that $f$ is operator monotone on
$(0,\infty)$. The existence of $\lim_{t\to\infty}f(t)/t\in[0,\infty)$ and
$\lim_{t\to\infty}f(t)\in(-\infty,\infty]$ was seen in the proof of Theorem \ref{T-2.6}.
Since (c)$'$ implies that $-f$ satisfies (c) of Corollary \ref{C-2.2}, the existence of
$\lim_{t\searrow0}tf(t)\in(-\infty,0]$ follows from Corollary \ref{C-2.2}, so it is
obvious that $\lim_{t\searrow0}t^\alpha f(t)=0$ if $\alpha>1$.
\end{proof}

\begin{remark}\label{R-2.8}\rm
In the proof of (a)$_n'$ $\Rightarrow$ (d)$_n'$ (if $n\ge2$) of Theorem \ref{T-2.6} we
used a result from \cite[Chapter XV]{Do}. In this respect, the equivalence between
(a)$'$ and (d)$'$ has a strong connection to \cite[Theorem 10]{Ho} and
\cite[p.\ 139, Theorem III]{Do}, in which the following result was given: Let $g$ be a
$C^1$ function on an interval $(a,b)$ and $c$ any point in $(a,b)$. Then
$L_g(t_1,\dots,t_n)$ is c.p.d.\ for all $t_1,\dots,t_n\in(a,b)$ of any size $n$ if and
only if $f$ is of the form
$$
g(t)=g(c)+g'(c)(t-c)+(t-c)^2f(t)
$$
with an operator monotone function $f$ on $(a,b)$. This in particular says that a $C^1$
function $f$ on $(a,b)$ is operator monotone if and only if
$L_{(t-c)^2f(t)}(t_1,\dots,t_n)$ is c.p.d.\ for all $t_1,\dots,t_n\in(a,b)$, $n\in\bN$. An
essential difference between the last condition and (d)$'$ is that the point $c$ is inside
the domain of $f$ for the former while it is the boundary point $0$ of $(0,\infty)$ for
the latter. So it does not seem easy to prove (a)$'$ $\Leftrightarrow$ (d)$'$ based on the
above result in \cite{Ho,Do}.
\end{remark}

\begin{remark}\label{R-2.9}\rm
Consider operator monotone functions $f_\lambda(t):=t/(1-\lambda t)$ on $(-1,1)$ with
$\lambda\in(-1,1)$, so $tf_\lambda(t)$ is $g_\lambda$ in Remark \ref{R-2.3}. By considering
$f_\lambda|_{(0,1)}$ and $-f_\lambda|_{(0,1)}$ with $\lambda\in(0,1)$, we see that neither
(a)$'$ $\Rightarrow$ (c)$'$ nor (c)$'$ $\Rightarrow$ (a)$'$ of Corollary \ref{C-2.7} can
be extended to functions on a finite open interval $(0,b)$. Indeed, the right counterparts
of Theorem \ref{T-2.6} and Corollary \ref{C-2.7} for functions on a finite interval $(a,b)$
will be presented in Section 4 (see Theorem \ref{T-4.1} and Corollary \ref{C-4.2}).
\end{remark}

\begin{remark}\label{R-2.10}\rm
Any of boundary conditions as $t\searrow0$ or $t\to\infty$ in Theorem \ref{T-2.6} and
Corollary \ref{C-2.7} is essential. For instance, the functions $t^3$, $t^{-1}$, $-t$, and
$-t^{-2}$ on $(0,\infty)$ are not $2$-monotone; see Proposition \ref{P-3.1}\,(1) for $t^3$
and $-t^{-2}$, and $t^{-1}$ and $-t$ are even not increasing as a numerical function. By
taking account of Proposition \ref{P-3.1}, the functions $t^3$ and $-t$ show that (b)$'$
$\Rightarrow$ (a)$_2$ is not true without $\limsup_{t\to\infty}f(t)/t<+\infty$ and
$\limsup_{t\to\infty}f(t)>-\infty$, respectively. Similarly, consider the functions
$t^{-1}$ and $-t$ to see that the two boundary conditions of (c)$'$ are essential for
(c)$'$ $\Rightarrow$ (a)$_2$, and the functions $t^{-1}$ and $-t^{-2}$ for the two
boundary conditions of (d)$'$.
\end{remark}

\section{Examples: power functions}
\setcounter{equation}{0}

In this section we examine the conditions in Theorems \ref{T-2.1} and \ref{T-2.6} in the
cases of lower orders $n=2,3$ for the power functions $t^\alpha$ on $(0,\infty)$. In fact,
we sometimes used such examples of power functions in the preceding section, for instance,
in Remarks \ref{R-2.4} and \ref{R-2.10}. Elementary discussions on the c.p.d.\ and
c.n.d.\ properties of $t^\alpha$ based on the Cauchy matrix and the Schur product theorem
are found in \cite[Section 2]{BS2}.

\begin{prop}\label{P-3.1}
Consider the power functions $t^\alpha$ on $(0,\infty)$, where $\alpha\in\bR$. Then:
\begin{itemize}
\item[\rm(1)] $t^\alpha$ is $2$-monotone if and only if $0\le\alpha\le1$, or equivalently,
$t^\alpha$ is operator monotone. Moreover, $-t^\alpha$ is $2$-monotone if and only if
$-1\le\alpha\le0$.
\item[\rm(2)] $t^\alpha$ is $2$-convex if and only if either $-1\le\alpha\le0$ or
$1\le\alpha\le2$, or equivalently, $t^\alpha$ is operator convex.
\item[\rm(3)] $L_{t^\alpha}(t_1,t_2)$ is c.p.d.\ for all $t_1,t_2\in(0,\infty)$ if and
only if either $0\le\alpha\le1$ or $\alpha\ge2$.
\item[\rm(4)] $L_{t^\alpha}(t_1,t_2)$ is c.n.d.\ for all $t_1,t_2\in(0,\infty)$ if and
only if either $\alpha\le0$ or $1\le\alpha\le2$.
\item[\rm(5)] $L_{t^\alpha}(t_1,t_2,t_3)$ is c.p.d.\ for all $t_1,t_2,t_3\in(0,\infty)$ if
and only if either $0\le\alpha\le1$ or $2\le\alpha\le3$.
\item[\rm(6)] $L_{t^\alpha}(t_1,t_2,t_3)$ is c.n.d.\ for all $t_1,t_2,t_3\in(0,\infty)$ if
and only if either $-1\le\alpha\le0$ or $1\le\alpha\le2$.
\end{itemize}
\end{prop}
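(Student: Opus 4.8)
\emph{Plan.} I would reduce all six items to one- and two-variable calculus by means of two tools. (A) L\"owner's theorem (``$t^\alpha$ is $n$-monotone iff $L_{t^\alpha}(t_1,\dots,t_n)\ge0$ for all $t_i>0$'') and Kraus's theorem (``$t^\alpha$ is $n$-convex iff $\big[(t^\alpha)^{[2]}(t_i,t_j,t_0)\big]_{i,j=1}^n\ge0$ for all $t_0,t_1,\dots,t_n>0$''); since $(t^\alpha)^{[1]}$ and $(t^\alpha)^{[2]}$ are homogeneous of degrees $\alpha-1$ and $\alpha-2$, one may normalize $t_0$ (resp.\ the last variable) to $1$ in all these matrices. (B) The elementary reduction: a Hermitian $(n+1)\times(n+1)$ matrix $[m_{ij}]$ is c.p.d.\ (resp.\ c.n.d.)\ iff $[m_{ij}-m_{i,n+1}-m_{n+1,j}+m_{n+1,n+1}]_{i,j=1}^n$ is positive (resp.\ negative) semidefinite (the two quadratic forms agree on $\bC_0^{n+1}$ via $x_{n+1}=-\sum_{i\le n}x_i$); in particular, taking $x_{n+1}=0$, if $L_{t^\alpha}$ of size $n+1$ is c.p.d.\ (c.n.d.)\ for all arguments then so is $L_{t^\alpha}$ of size $n$.

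For the size-$2$ items I would start from $(t^\alpha)^{[1]}(1+u,1)=\sum_{k\ge0}\binom{\alpha}{k+1}u^k$. For part~(1): $L_{t^\alpha}(t,1)\ge0$ forces $\alpha\ge0$ and $D(t):=\alpha^2t^{\alpha-1}-\big((t^\alpha)^{[1]}(t,1)\big)^2\ge0$, and since $D(t)=-\tfrac{\alpha^2(\alpha-1)(\alpha+1)}{12}(t-1)^2+O((t-1)^3)$ one gets $-1\le\alpha\le1$, hence $0\le\alpha\le1$; the ``if'' direction is L\"owner's theorem (for $-t^\alpha$ use $-t^\alpha=-t^{-\beta}$, $\beta=-\alpha\in[0,1]$, which is operator monotone, and that $D$ is unchanged by the sign flip). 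For parts~(3) and (4): a real symmetric $\left[\begin{smallmatrix}a&b\\b&c\end{smallmatrix}\right]$ is c.p.d.\ iff $a+c\ge2b$, so (3) (resp.\ (4)) is the assertion $\Psi_\alpha(t):=\alpha t^{\alpha-1}+\alpha-2(t^\alpha)^{[1]}(t,1)\ge0$ (resp.\ $\le0$) for all $t>0$; writing $\Phi_\alpha(t):=(t-1)\Psi_\alpha(t)=(\alpha-2)t^\alpha-\alpha t^{\alpha-1}+\alpha t+(2-\alpha)$ one finds $\Phi_\alpha(1)=\Phi_\alpha'(1)=0$ and $\Phi_\alpha''(t)=\alpha(\alpha-1)(\alpha-2)t^{\alpha-3}(t-1)$, and a monotonicity argument governed by the sign of $\alpha(\alpha-1)(\alpha-2)$ (plus the trivial $\alpha\in\{0,1,2\}$) yields the ranges $[0,1]\cup[2,\infty)$ and $(-\infty,0]\cup[1,2]$. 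For part~(2): $t^\alpha$ numerically convex forces $\alpha\le0$ or $\alpha\ge1$, and from $(t^\alpha)^{[2]}(1+u,1+v,1)=\sum_{k\ge0}\binom{\alpha}{k+2}h_k(u,v)$ (with $h_k$ the complete homogeneous symmetric polynomial: $h_0=1$, $h_1=u+v$, $h_2=u^2+uv+v^2$) the normalized $2\times2$ Kraus matrix has determinant $\big(\binom{\alpha}{2}\binom{\alpha}{4}-\binom{\alpha}{3}^2\big)(u-v)^2+O(\text{cubic})=-\tfrac{\alpha^2(\alpha-1)^2(\alpha-2)(\alpha+1)}{144}(u-v)^2+\cdots$, whose nonnegativity near $u=v=0$ forces $-1\le\alpha\le2$; together these give $[-1,0]\cup[1,2]$, the converse being Kraus's theorem.

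For the size-$3$ items I would lean on Section~1. In part~(5): if $0\le\alpha\le1$ then $t^\alpha$ is operator monotone and $L_{t^\alpha}$ of every size is positive semidefinite, a fortiori c.p.d.; if $2\le\alpha\le3$ then $t^{\alpha-2}=f(t)/t$ for $f(t)=t^{\alpha-1}$ is operator monotone, so Lemma~\ref{L-1.5} (the implication ${\rm(v)}_3\Rightarrow{\rm(vii)}_3$, whose hypotheses hold since $\alpha>1$) gives c.p.d.\ of $L_{t^\alpha}=L_{tf(t)}$ of size $3$. Conversely c.p.d.\ of size $3$ forces c.p.d.\ of size $2$, so $\alpha\in[0,1]\cup[2,\infty)$ by part~(3), and if $\alpha>1$ then Lemma~\ref{L-1.5} (${\rm(vii)}_3\Rightarrow{\rm(v)}_2$) makes $t^{\alpha-2}$ $2$-monotone, whence $\alpha\le3$ by part~(1). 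In part~(6): if $-1\le\alpha\le0$ or $1\le\alpha\le2$ then $t^\alpha$ is operator convex and $L_{t^\alpha}$ is everywhere c.n.d.\ by Corollary~\ref{C-2.2}; if $0<\alpha<1$ or $\alpha>2$ then $L_{t^\alpha}$ of size $2$ is not c.n.d.\ by part~(4), hence neither is size $3$. The only case not settled this way is $\alpha<-1$: with $t_3=1$, tool (B) turns c.n.d.\ of $L_{t^\alpha}(t_1,t_2,1)$ into negative semidefiniteness of the $2\times2$ matrix with diagonal entries $\Psi_\alpha(t_1),\Psi_\alpha(t_2)$ and off-diagonal entry $(t^\alpha)^{[1]}(t_1,t_2)-(t^\alpha)^{[1]}(t_1,1)-(t^\alpha)^{[1]}(t_2,1)+\alpha$, and letting $t_1\to\infty$ its determinant tends to $\alpha^2t_2^{\alpha-1}-\big((t^\alpha)^{[1]}(t_2,1)\big)^2=D(t_2)$, which is negative for $t_2$ near $1$ (since $\alpha<-1$); so the matrix becomes indefinite for suitable $t_1,t_2$ and c.n.d.\ fails.

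The hard part will be the explicit computations rather than the structure: Sections~1--2 and the classical L\"owner/Kraus ranges carry most of the load, but one must (i) get the Kraus-determinant leading coefficient in part~(2) right---it is $\binom{\alpha}{2}\binom{\alpha}{4}-\binom{\alpha}{3}^2$, and a crude first-order truncation of the matrix entries loses the essential $\binom{\alpha}{4}h_2$ contribution---and (ii) find, for $\alpha<-1$ in part~(6), the right degeneration ($t_1\to\infty$, $t_2\to1$) collapsing the $3\times3$ obstruction onto the $2\times2$ determinant $D$ from part~(1). These two points are where I would spend the most care.
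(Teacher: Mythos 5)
Your argument is correct and reaches all six equivalences, but it takes a genuinely different route from the paper at every point where the paper does not simply cite a reference. For (1) and (2) the paper quotes \cite[Proposition 3.1]{HT}; you in effect reprove that citation, and your two key coefficients check out: the $u^2$-coefficient of the Loewner determinant is indeed $-\alpha^2(\alpha-1)(\alpha+1)/12$, and $\binom{\alpha}{2}\binom{\alpha}{4}-\binom{\alpha}{3}^2=-\alpha^2(\alpha-1)^2(\alpha-2)(\alpha+1)/144$, so the local expansions force $[0,1]$ and $[-1,0]\cup[1,2]$ as claimed (just make sure you use Kraus's theorem with the same indexing as \cite{HT}, namely that $2$-convexity corresponds to the $2\times2$ Kraus matrix). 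For (3)--(4) the paper invokes \cite[p.~137, Lemma 3]{Do} (size-$2$ c.p.d./c.n.d.\ of $L_f$ $\Leftrightarrow$ convexity/concavity of $f'$), which for $t^\alpha$ reduces to the sign of $\alpha(\alpha-1)(\alpha-2)$; your function $\Phi_\alpha$ with $\Phi_\alpha(1)=\Phi_\alpha'(1)=0$ and $\Phi_\alpha''(t)=\alpha(\alpha-1)(\alpha-2)t^{\alpha-3}(t-1)$ proves the same statement from scratch and is equivalent. The substantive divergence is in (5) and (6): the paper writes out an explicit c.p.d./c.n.d.\ criterion for $3\times3$ matrices, clears denominators to obtain the polynomial inequality \eqref{F-3.2}, and compares leading degrees as $x\to\infty$ (to get $\alpha\le3$) or as $x\to0$ (to get $\alpha\ge-1$). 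You avoid that computation entirely: for (5) you run Lemma \ref{L-1.5} backwards, ${\rm(vii)}_3\Rightarrow{\rm(v)}_2$ applied to $f(t)=t^{\alpha-1}$ (whose hypotheses hold for $\alpha>1$), concluding that $t^{\alpha-2}$ is $2$-monotone and hence $\alpha\le3$ by (1); for (6) with $\alpha<-1$ you collapse the $3\times3$ obstruction onto the size-$2$ determinant $D(t_2)$ by letting $t_1\to\infty$. I verified both degenerations: the reduced $2\times2$ determinant does converge to $\alpha^2t_2^{\alpha-1}-\bigl((t^\alpha)^{[1]}(t_2,1)\bigr)^2=D(t_2)$, which is negative near $t_2=1$ when $\alpha<-1$, and the c.p.d.-to-PSD reduction you use is indeed an equivalence (the two quadratic forms coincide on $\bC_0^{n+1}$), even though Lemma \ref{L-1.4} only records one direction. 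What your approach buys is a proof of the size-$3$ necessity that recycles Section~1 and parts (1)--(4) instead of the paper's ad hoc polynomial asymptotics; what it costs is heavier reliance on Lemma \ref{L-1.5} and on external statements of L\"owner's and Kraus's theorems, where the paper's Section~3 computation is self-contained.
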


\begin{proof}
For (1) and (2) see \cite[Proposition 3.1]{HT}. Here note that $-t^\alpha$ is $2$-monotone
if and only if so is $t^{-\alpha}=-(-t^\alpha)^{-1}$. (3) and (4) are immediately seen
from \cite[p.\ 137, Lemma 3]{Do}. 

(5)\enspace
If $0\le\alpha\le1$ or $2\le\alpha\le3$, then $t^{\alpha-1}$ is operator convex on
$(0,\infty)$ and Corollary \ref{C-2.2} implies the c.p.d.\ condition here. For the
converse, one can easily check that a $3\times3$ real matrix
$\bmatrix a&d&e\\d&b&f\\e&f&c\endbmatrix$ is c.p.d.\ (resp., c.n.d.) if and only if
$$
a+c\ge2e\quad(\mbox{resp.},\ \ a+c\le2e),
$$
$$
b+c\ge2f\quad(\mbox{resp.},\ \ b+c\le2f),
$$
$$
(c+d-e-f)^2\le(a+c-2e)(b+c-2f).
$$
Since the above first two conditions for c.p.d.\ of $L_{t^\alpha}(t_1,t_2)$ means the
c.p.d.\ condition in (3), we must have $0\le\alpha\le1$ or $\alpha\ge2$ from (3). (This is
also obvious since the c.p.d.\ of order three implies that of order two.) The last
condition for c.p.d.\ of $L_{t^\alpha}(x,y,1)$ is written as
\begin{align}
&\biggl(\alpha+{x^\alpha-y^\alpha\over x-y}
-{x^\alpha-1\over x-1}-{y^\alpha-1\over y-1}\biggr)^2 \nonumber\\
&\qquad\le\biggl(\alpha(x^{\alpha-1}+1)-2\,{x^\alpha-1\over x-1}\biggr)
\biggl(\alpha(y^{\alpha-1}+1)-2\,{y^\alpha-1\over y-1}\biggr). \label{F-3.1}
\end{align}
Multiplying $(x-y)^2(x-1)^2(y-1)^2$ to the both sides of \eqref{F-3.1} gives
\begin{align}
&\bigl(\alpha(x-y)(x-1)(y-1)+x^\alpha(y-1)^2-(x-1)y^\alpha(y-1) \nonumber\\
&\quad+(x-y)(y-1)-(x-y)(x-1)(y^\alpha-1)\bigr)^2 \nonumber\\
&\qquad\le(x-y)^2(x-1)(y-1)F_\alpha(x)F_\alpha(y), \label{F-3.2}
\end{align}
where
$$
F_\alpha(x):=(\alpha-2)x^\alpha+\alpha x-\alpha x^{\alpha-1}-(\alpha-2).
$$
When $\alpha>2$, the left-hand side of \eqref{F-3.2} has the term $x^{2\alpha}$ of maximal
degree for $x$ with positive coefficient $(y-1)^4$, and the right-hand side has the term
$x^{\alpha+3}$ of maximal degree for $x$ with coefficient $(\alpha-2)(y-1)F_\alpha(y)$
which is positive for large $y>0$. Hence $2\alpha\le\alpha+3$ or $\alpha\le3$ is necessary
for \eqref{F-3.1} to hold for all $x,y>0$. So we must have $0\le\alpha\le1$ or
$2\le\alpha\le3$.

(6)\enspace
If $-1\le\alpha\le0$ or $1\le\alpha\le2$, then $t^\alpha$ is operator convex on
$(0,\infty)$ and Corollary \ref{C-2.2} implies the c.n.d.\ condition here. Conversely,
since the c.n.d.\ condition here implies that of order $2$ in (4), we must have
$\alpha\le0$ or $1\le\alpha\le2$ form (4). Moreover, \eqref{F-3.2} holds in this case too.
When $\alpha<0$, the left-hand side of \eqref{F-3.2} has the term $x^{2\alpha}$ of maximal
degree for $1/x$ with positive coefficient $(y-1)^4$, and the right-hand side has the term
$x^{\alpha-1}$ of maximal degree for $1/x$ with coefficient $\alpha y^2(y-1)F_\alpha(y)$
which is positive for small $y>0$. Hence $2\alpha\ge\alpha-1$ or $\alpha\ge-1$ must hold,
so we have $-1\le\alpha\le0$ or $1\le\alpha\le2$.
\end{proof}

Concerning the conditions of Theorem \ref{T-2.1} the above proposition shows that
(b)$_2$ $\Rightarrow$ (a)$_2$, (c)$_2$ $\Rightarrow$ (a)$_2$, (b)$_2$
$\Rightarrow$ (c)$_2$, and (c)$_2$ $\Rightarrow$ (b)$_2$ are all invalid while (a)$_2$,
(b)$_3$, and (c)$_3$ are equivalent for the power functions $t^\alpha$. Moreover,
concerning Theorem \ref{T-2.6}, we notice from the proposition that, restricted to the
power functions $t^\alpha$, conditions (a)$_2'$, (b)$_2'$, and (c)$_2'$ are equivalent
but (d)$_2'$ is strictly weaker.

\section{Functions on $(a,b)$}
\setcounter{equation}{0}

For a real $C^1$ function $f$ on $(a,\infty)$ where $-\infty<a<\infty$, we have the same
implications as in Theorem \ref{T-2.6} with slight modifications of (a)$_n'$--(d)$_n'$ by
applying the theorem to $f(t+a)$ on $(0,\infty)$. For example, (a)$_n'$ and (c)$_n'$ are
modified as
\begin{itemize}
\item[\rm(a)$_n'$] $f$ is $n$-monotone on $(a,\infty)$,
\item[\rm(c)$_n'$] $\liminf_{t\searrow a}(t-a)f(t)\le0$,
$\limsup_{t\to\infty}f(t)>-\infty$, and $L_{(t-a)f(t)}(t_1,\dots,t_n)$ is c.n.d.\ for
all $t_1,\dots,t_n\in(a,\infty)$,
\end{itemize}
and (b)$_n'$ and (d)$_n'$ are similarly modified.

Moreover, for a real $C^1$ function $f$ on $(-\infty,b)$ where $-\infty<b<\infty$, one can
apply Theorem \ref{T-2.6} to $-f(b-t)$ on $(0,\infty)$ so that the same implications as
there hold for the following conditions:
\begin{itemize}
\item[\rm(a)$_n''$] $f$ is $n$-monotone on $(-\infty,b)$;
\item[\rm(b)$_n''$] $\limsup_{t\to-\infty}f(t)/t<+\infty$,
$\liminf_{t\to-\infty}f(t)<+\infty$, and $L_f(t_1,\dots,t_n)$ is c.p.d.\ for all
$t_1,\dots,t_n\in(-\infty,b)$;
\item[\rm(c)$_n''$] $\limsup_{t\nearrow b}(b-t)f(t)\ge0$,
$\liminf_{t\to-\infty}f(t)<+\infty$, and $L_{(b-t)f(t)}(t_1,\dots,t_n)$ is c.n.d.\ for
all $t_1,\dots,t_n\in(-\infty,b)$;
\item[\rm(d)$_n''$] $\limsup_{t\nearrow b}(b-t)f(t)\ge0$,
$\liminf_{t\nearrow b}(b-t)^2f(t)\le0$, and $L_{(b-t)^2f(t)}(t_1,\dots,t_n)$ is
c.p.d.\ for all $t_1,\dots,t_n\in(-\infty,b)$.
\end{itemize}

The aim of this section is to prove the next theorem that is the counterpart of Theorem
\ref{T-2.6} for a real $C^1$ function on an finite open interval $(a,b)$.

\begin{thm}\label{T-4.1}
Let $f$ be a real $C^1$ function on $(a,b)$ where $-\infty<a<b<\infty$.
For each $n\in\bN$ consider the following conditions:
\begin{itemize}
\item[$(\alpha)_n$] $f$ is $n$-monotone on $(a,b)$;
\item[$(\beta)_n$] $\limsup_{t\nearrow b}(b-t)f(t)<+\infty$,
$\limsup_{t\nearrow b}f(t)>-\infty$, and $L_{(b-t)^2f(t)}(t_1,\dots,t_n)$ is c.p.d.\ for
all $t_1,\dots,t_n\in(a,b)$;
\item[$(\gamma)_n$] $\liminf_{t\searrow a}(t-a)f(t)\le0$,
$\limsup_{t\nearrow b}f(t)>-\infty$, and $L_{(t-a)(b-t)f(t)}(t_1,\dots,t_n)$
is c.n.d.\ for all $t_1,\dots,t_n\in(a,b)$;
\item[$(\delta)_n$] $\liminf_{t\searrow a}(t-a)f(t)\le0$,
$\limsup_{t\searrow a}(t-a)^2f(t)\ge0$, and $L_{(t-a)^2f(t)}(t_1,\dots,t_n)$
is c.p.d.\ for all $t_1,\dots,t_n\in(a,b)$.
\end{itemize}
Then for every $n\in\bN$ the following implications hold:
$$
(\alpha)_n\Longrightarrow(\beta)_n\ \mbox{if $n\ge2$},\quad
(\beta)_{4n+1}\Longrightarrow(\alpha)_n,\quad
(\alpha)_{2n+2}\Longrightarrow(\gamma)_n,\quad
(\gamma)_{2n+1}\Longrightarrow(\alpha)_n,
$$
$$
(\alpha)_n\Longrightarrow{\rm(\delta)}_n\ \mbox{if $n\ge2$},\quad
(\gamma)_{2n+1}\Longrightarrow(\delta)_n,\quad
(\delta)_{2n+1}\Longrightarrow(\gamma)_n.
$$
\end{thm}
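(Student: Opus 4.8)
The plan is to transport the whole problem to $(0,\infty)$ by a fractional linear operator monotone bijection. Put
$$
\psi(s):=a+(b-a)\,\frac{s}{1+s}=\frac{a+bs}{1+s}\quad(s>0),\qquad
\varphi(t):=\psi^{-1}(t)=\frac{t-a}{b-t}\quad(t\in(a,b)).
$$
Writing $\psi(s)=a+(b-a)\bigl(1-(1+s)^{-1}\bigr)$ and $\varphi(t)=-1-(b-a)(t-b)^{-1}$ shows that $\psi$ is operator monotone on $(0,\infty)$ and $\varphi$ is operator monotone on $(a,b)$ (using that $u\mapsto-u^{-1}$ is operator monotone on $(0,\infty)$ and on $(-\infty,0)$). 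Hence $A\mapsto\psi(A)$ is an order isomorphism from the Hermitian matrices in $\bM_n$ with spectrum in $(0,\infty)$ onto those with spectrum in $(a,b)$, and $f(\psi(A))=\tilde f(A)$ with $\tilde f:=f\circ\psi$, a real $C^1$ function on $(0,\infty)$. Consequently $(\alpha)_n$ holds iff $\tilde f$ satisfies condition (a)$_n'$ of Theorem \ref{T-2.6}. The goal is to show, similarly, that $(\beta)_n$, $(\gamma)_n$, $(\delta)_n$ are equivalent to conditions (b)$_n'$, (c)$_n'$, (d)$_n'$ of Theorem \ref{T-2.6} for $\tilde f$; then Theorem \ref{T-2.6} yields all seven asserted implications verbatim (with the same indices).

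For the matrix conditions, record first the Loewner-matrix change of variables: from $\psi(s)-\psi(s')=(b-a)(s-s')/((1+s)(1+s'))$ and $1+\varphi(t)=(b-a)/(b-t)$ one gets, for any $C^1$ function $g$ on $(a,b)$ and $s_i>0$ with $t_i:=\psi(s_i)$,
$$
L_{g\circ\psi}(s_1,\dots,s_n)=\frac1{b-a}\,\mathrm{diag}(b-t_i)\,L_g(t_1,\dots,t_n)\,\mathrm{diag}(b-t_j).
$$
A diagonal congruence does \emph{not} preserve conditional definiteness, so this alone is insufficient; the point is that the three functions in $(\beta)_n$, $(\gamma)_n$, $(\delta)_n$ all have the form $\Phi(t)=(b-t)^2\,\chi(\varphi(t))$ with $\chi(s)$ equal to $\tilde f(s)$, $s\tilde f(s)$, $s^2\tilde f(s)$ respectively (using $b-t=(b-a)/(1+s)$, $t-a=(b-a)s/(1+s)$, so e.g. $(b-t)^2\chi(\varphi(t))=(b-t)^2\varphi(t)f(t)=(b-t)(t-a)f(t)$ when $\chi(s)=s\tilde f(s)$). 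For such $\Phi$ one has $\Phi\circ\psi(s)=(b-a)^2(1+s)^{-2}\chi(s)$, and shifting $t\mapsto s+1$ in the elementary identity \eqref{F-1.4} gives the algebraic identity
$$
\chi^{[1]}(s_i,s_j)-\frac{\chi(s_i)}{1+s_i}-\frac{\chi(s_j)}{1+s_j}
=(1+s_i)\Bigl(\frac{\chi(s)}{(1+s)^2}\Bigr)^{[1]}\!(s_i,s_j)\,(1+s_j).
$$
Combining the last two displays produces
$$
L_\Phi(t_1,\dots,t_n)=(b-a)\Bigl(L_\chi(s_1,\dots,s_n)-\Bigl[\tfrac{\chi(s_i)}{1+s_i}+\tfrac{\chi(s_j)}{1+s_j}\Bigr]_{i,j=1}^n\Bigr).
$$
Since the matrix $[u_i+u_j]$ annihilates $\bC_0^n$ and $b-a>0$, it follows that $L_\Phi(t_1,\dots,t_n)$ is c.p.d.\ (resp.\ c.n.d.) for all $t_1,\dots,t_n\in(a,b)$ iff $L_\chi(s_1,\dots,s_n)$ is c.p.d.\ (resp.\ c.n.d.) for all $s_1,\dots,s_n\in(0,\infty)$, which is the matrix half of each of $(\beta)_n\Leftrightarrow$(b)$_n'$, $(\gamma)_n\Leftrightarrow$(c)$_n'$, $(\delta)_n\Leftrightarrow$(d)$_n'$.

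The boundary conditions transform directly. Since $(b-t)f(t)=\frac{b-a}{1+s}\tilde f(s)$, $(t-a)f(t)=(b-a)\frac{s}{1+s}\tilde f(s)$, $(b-t)^2f(t)=\bigl(\tfrac{b-a}{1+s}\bigr)^2\tilde f(s)$, $(t-a)^2f(t)=(b-a)^2\tfrac{s^2}{(1+s)^2}\tilde f(s)$, and $(1+s)^{-1}\to1$ as $s\searrow0$ while $\varphi(t)\to0$ as $t\searrow a$ and $\varphi(t)\to\infty$ as $t\nearrow b$, each $\limsup$ or $\liminf$ condition in $(\beta)_n$--$(\delta)_n$ is equivalent to the corresponding one in (b)$_n'$--(d)$_n'$ for $\tilde f$ (for instance $\limsup_{t\nearrow b}(b-t)f(t)<+\infty\Leftrightarrow\limsup_{s\to\infty}\tilde f(s)/s<+\infty$ and $\limsup_{t\searrow a}(t-a)^2f(t)\ge0\Leftrightarrow\limsup_{s\searrow0}s^2\tilde f(s)\ge0$). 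With the four equivalences established, the proof reduces to Theorem \ref{T-2.6}. I expect the only real obstacle to be the matrix translation just described: the naive Möbius substitution produces only a diagonal congruence, which is harmless for positive semidefiniteness (hence for L\"owner's theorem and the monotonicity statements) but not for conditional definiteness, so the double-zero factors $(b-t)^2$, $(t-a)(b-t)$, $(t-a)^2$ in $(\beta)_n$, $(\gamma)_n$, $(\delta)_n$ are precisely what converts that congruence into a perturbation by a $[u_i+u_j]$ matrix, invisible on $\bC_0^n$. Verifying the shifted form of \eqref{F-1.4} and that the three functions really factor as $(b-t)^2\chi(\varphi(t))$ is elementary but is the heart of the argument; everything else is bookkeeping.
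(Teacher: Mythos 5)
Your proposal is correct and follows essentially the same route as the paper: transport $f$ to $\tilde f=f\circ\psi$ on $(0,\infty)$ via the operator monotone Möbius bijection, show that $(\alpha)_n$--$(\delta)_n$ are equivalent to (a)$_n'$--(d)$_n'$ for $\tilde f$ (the key being that the double-zero weights turn the diagonal congruence into a perturbation by a matrix of the form $[u_i+u_j]$, which vanishes on $\bC_0^n$), and then invoke Theorem \ref{T-2.6}. Your identity $L_\Phi(t_1,\dots,t_n)=(b-a)\bigl(L_\chi(s_1,\dots,s_n)-[\chi(s_i)/(1+s_i)+\chi(s_j)/(1+s_j)]\bigr)$ is exactly the paper's three displayed divided-difference formulas after substituting $1+s_i=(b-a)/(b-t_i)$; you merely derive it via the shifted form of \eqref{F-1.4} rather than by direct computation.
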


\begin{proof}
Define a bijective function $\psi:(a,b)\to(0,\infty)$ by
$$
\psi(t):={t-a\over b-t}=-1+{b-a\over b-t},\qquad t\in(a,b),
$$
and hence
$$
\psi^{-1}(x)={bx+a\over x+1}=b-{b-a\over x+1},\qquad x\in(0,\infty).
$$
Furthermore, define a $C^1$ function $\tilde f$ on $(0,\infty)$ by
$\tilde f(x):=f(\psi^{-1}(x))$ for $x\in(0,\infty)$. The theorem immediately follows from
Theorem \ref{T-2.6} once we show that $(\alpha)_n$, $(\beta)_n$, $(\gamma)_n$, and
$(\delta)_n$ are equivalent, respectively, to (a)$_n'$, (b)$_n'$, (c)$_n'$, and (d)$_n'$
for $\tilde f$. First, the equivalence of $(\alpha)_n$ to (a)$_n'$ for $\tilde f$ is
immediate since both $\psi$ on $(a,b)$ and $\psi^{-1}$ on $(0,\infty)$ are operator
monotone. The following equalities are easy to check:
\allowdisplaybreaks{
\begin{align*}
\limsup_{x\to\infty}\tilde f(x)/x&={1\over b-a}\limsup_{t\nearrow b}(b-t)f(t), \\
\limsup_{x\to\infty}\tilde f(x)&=\limsup_{t\nearrow b}f(t), \\
\liminf_{x\searrow0}x\tilde f(x)&={1\over b-a}\liminf_{t\searrow a}(t-a)f(t), \\
\limsup_{x\searrow0}x^2\tilde f(x)&={1\over(b-a)^2}\limsup_{t\searrow a}(t-a)^2f(t).
\end{align*}
}Next, let $t_1,\dots,t_n\in(a,b)$ be arbitrary and let $x_i:=\psi(t_i)$ for $i=1,\dots,n$.
By direct computations we have
\begin{align*}
\tilde f^{[1]}(x_i,x_j)&={f(t_i)-f(t_j)\over\psi(t_i)-\psi(t_j)} \\
&={1\over b-a}(b-t_i)f^{[1]}(t_i,t_j)(b-t_j) \\
&={1\over b-a}\bigl\{((b-t)^2f(t))^{[1]}(t_i,t_j)+(b-t_i)f(t_i)+(b-t_j)f(t_j)\bigr\}, \\
\Bigl(x\tilde f(x)\Bigr)^{[1]}(x_i,x_j)
&={\psi(t_i)f(t_i)-\psi(t_j)f(t_j)\over\psi(t_i)-\psi(t_j)} \\
&={1\over b-a}(b-t_i)\biggl({t-a\over b-t}\,f(t)\biggr)^{[1]}(t_i,t_j)(b-t_j) \\
&={1\over b-a}\bigl\{\bigl((t-a)(b-t)f(t)\bigr)^{[1]}(t_i,t_j)
+(t_i-a)f(t_i)+(t_j-a)f(t_j)\bigr\}, \\
\Bigl(x^2\tilde f(x)\Bigr)^{[1]}(x_i,x_j)
&={\psi(t_i)^2f(t_i)-\psi(t_j)^2f(t_j)\over\psi(t_i)-\psi(t_j)} \\
&={1\over b-a}(b-t_i)\biggl(\biggl({t-a\over b-t}\biggr)^2f(t)\biggr)^{[1]}(t_i,t_j)
(b-t_j) \\
&={1\over b-a}\biggl\{\bigl((t-a)^2f(t)\bigr)^{[1]}(t_i,t_j)
+{(t_i-a)^2\over b-t_i}f(t_i)+{(t_j-a)^2\over b-t_j}f(t_j)\biggr\}.
\end{align*}
It is seen from the above equalities that $(\beta)_n$, $(\gamma)_n$, and $(\delta)_n$ are
equivalent, respectively, to (b)$_n'$, (c)$_n'$, and (d)$_n'$ for $\tilde f$.
\end{proof}

\begin{cor}\label{C-4.2}
Let $f$ be a real $C^1$ function on $(a,b)$ where $-\infty<a<b<\infty$. Then the following
conditions are equivalent:
\begin{itemize}
\item[$(\alpha)$] $f$ is operator monotone on $(a,b)$;
\item[$(\beta)$] $\limsup_{t\nearrow b}(b-t)f(t)<+\infty$,
$\limsup_{t\nearrow b}f(t)>-\infty$, and $L_{(b-t)^2f(t)}(t_1,\dots,t_n)$ is c.p.d.\ for
all $n\in\bN$ and all $t_1,\dots,t_n\in(a,b)$;
\item[$(\gamma)$] $\liminf_{t\searrow a}(t-a)f(t)\le0$,
$\limsup_{t\nearrow b}f(t)>-\infty$, and $L_{(t-a)(b-t)f(t)}(t_1,\dots,t_n)$ is c.n.d.\ for
all $n\in\bN$ and all $t_1,\dots,t_n\in(a,b)$;
\item[$(\delta)$] $\liminf_{t\searrow a}(t-a)f(t)\le0$,
$\limsup_{t\searrow a}(t-a)^2f(t)\ge0$, and $L_{(t-a)^2f(t)}(t_1,\dots,t_n)$ is c.p.d.\ for
all $n\in\bN$ and all $t_1,\dots,t_n\in(a,b)$.
\end{itemize}
\end{cor}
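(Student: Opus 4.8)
The plan is to deduce Corollary~\ref{C-4.2} from Theorem~\ref{T-4.1} in exactly the same way that Corollary~\ref{C-2.2} and Corollary~\ref{C-2.7} are deduced from Theorems~\ref{T-2.1} and~\ref{T-2.6}. Recall that $f$ is operator monotone on $(a,b)$ if and only if $f$ is $n$-monotone on $(a,b)$ for all $n\in\bN$; that is, $(\alpha)$ holds iff $(\alpha)_n$ holds for every $n$. Similarly, each of $(\beta)$, $(\gamma)$, $(\delta)$ is the condition ``$(\beta)_n$ (resp.\ $(\gamma)_n$, $(\delta)_n$) for all $n\in\bN$''. So the equivalence of $(\alpha)$--$(\delta)$ is a purely formal consequence of the chains of implications supplied by Theorem~\ref{T-4.1}.

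Concretely, here are the steps I would carry out. First, $(\alpha)\Rightarrow(\beta)$: assume $f$ is operator monotone, hence $n$-monotone for every $n$; applying $(\alpha)_n\Rightarrow(\beta)_n$ (valid for $n\ge2$) gives $(\beta)_n$ for all $n\ge2$, and the case $n=1$ of $(\beta)$ is trivial since the c.p.d.\ condition on a $1\times1$ matrix is void and the two boundary conditions are already contained in $(\beta)_2$. Second, $(\beta)\Rightarrow(\alpha)$: from $(\beta)$ we have $(\beta)_{4n+1}$ for every $n$, whence $(\beta)_{4n+1}\Rightarrow(\alpha)_n$ yields $(\alpha)_n$ for all $n$, i.e.\ $(\alpha)$. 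Third, $(\alpha)\Rightarrow(\gamma)$: use $(\alpha)_{2n+2}\Rightarrow(\gamma)_n$ for each $n$ (again the $n=1$ boundary clauses of $(\gamma)$ come for free, the c.n.d.\ condition on a $1\times1$ matrix being void); alternatively combine $(\alpha)\Rightarrow(\delta)$ with $(\delta)_{2n+1}\Rightarrow(\gamma)_n$. Fourth, $(\gamma)\Rightarrow(\alpha)$: apply $(\gamma)_{2n+1}\Rightarrow(\alpha)_n$ for all $n$. Fifth, $(\alpha)\Rightarrow(\delta)$: apply $(\alpha)_n\Rightarrow(\delta)_n$ for $n\ge2$ and note $n=1$ is trivial; and $(\delta)\Rightarrow(\gamma)$ via $(\delta)_{2n+1}\Rightarrow(\gamma)_n$, while $(\gamma)\Rightarrow(\delta)$ follows either from $(\gamma)\Rightarrow(\alpha)\Rightarrow(\delta)$ or directly from $(\gamma)_{2n+1}\Rightarrow(\delta)_n$. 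Collecting these, $(\alpha)\Leftrightarrow(\beta)$, $(\alpha)\Leftrightarrow(\gamma)$, and $(\gamma)\Leftrightarrow(\delta)$, which gives the equivalence of all four.

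There is essentially no obstacle here; the only point needing a line of care is the handling of the case $n=1$ for $(\beta)$, $(\gamma)$, $(\delta)$, since the implications $(\alpha)_n\Rightarrow(\beta)_n$ and $(\alpha)_n\Rightarrow(\delta)_n$ in Theorem~\ref{T-4.1} are stated only for $n\ge2$. But for $n=1$ the relevant Loewner ``matrix'' is a single real number, so the c.p.d./c.n.d.\ requirement is automatically satisfied, and the boundary conditions in $(\beta)_1$, $(\gamma)_1$, $(\delta)_1$ are weaker than those already obtained from $n=2$ (or can be read off directly from the operator monotonicity of $f$, as in the openings of the proofs of Theorems~\ref{T-2.6} and~\ref{T-4.1}). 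Hence the proof reduces to the bookkeeping above and can be written in a few lines.
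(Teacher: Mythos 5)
Your proposal is correct and is exactly how the paper obtains Corollary \ref{C-4.2}: it is left as an immediate formal consequence of Theorem \ref{T-4.1}, just as Corollaries \ref{C-2.2} and \ref{C-2.7} follow from Theorems \ref{T-2.1} and \ref{T-2.6}, by combining the implications over all $n$ (the $n=1$ cases being trivial since the c.p.d./c.n.d.\ condition is void for $1\times1$ matrices and the boundary conditions do not depend on $n$).
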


\begin{remark}\rm
Let $f$ be a $C^1$ function on a finite interval $(a,b)$ and $c$ be an arbitrary point in
$(a,b)$. As mentioned in Remark \ref{R-2.8}, it is known by \cite{Ho,Do} that $f$ is
operator monotone on $(a,b)$ if and only if $L_{(t-c)^2f(t)}(t_1,\dots,t_n)$ is c.p.d.\ for
all $t_1,\dots,t_n\in(a,b)$, $n\in\bN$. By letting $c\nearrow b$ and $c\searrow a$ it
follows that the $(\alpha)$ implies the c.p.d.\ conditions in $(\beta)$ and $(\delta)$.
Corollary \ref{C-4.2} says that the c.p.d.\ of $L_{(t-c)^2f(t)}$ for the boundary point
$c=b$ or $c=a$ with additional boundary conditions conversely implies the c.p.d.\ of
$L_{(t-c)^2f(t)}$ for all $c\in(a,b)$. On the other hand, it is known (see
\cite[Corollary 2.7.8]{Hi} and \cite[Lemma 2.1]{Uc}) that $f$ is operator convex on $(a,b)$
if and only if $f^{[1]}(c,\cdot)$ is opertor monotone on $(a,b)$ for some $c\in(a,b)$. So
one can also obtain characterizarions of the operator convexity of $f$ by applying
Corollary \ref{C-4.2} to $f^{[1]}(c,\cdot)$ when $f$ is assumed to be $C^2$ on $(a,b)$.
However, such characterizations are not so immediate to the function $f$ as those in
Corollary \ref{C-2.2} for $f$ on $(0,\infty)$.
\end{remark}

\begin{remark}\rm
Let $f_\lambda$, $\lambda\in[-1,1]$, be operator monotone functions on $(-1,1)$ given in
Remark \ref{R-2.9}, which are kernel functions in L\"owner's integral representation for
operator monotone functions on $(-1,1)$. Theorem \ref{T-4.1} says that
$L_{(1-t)^2f_\lambda(t)}(t_1,\dots,t_n)$ and $L_{(t+1)^2f_\lambda(t)}(t_1,\dots,t_n)$ are
c.p.d.\ and $L_{(1-t^2)f_\lambda}(t_1,\dots,t_n)$ is c.n.d.\ for every
$t_1,\dots,t_n\in(-1,1)$. Indeed, these can be directly checked by the following
expressions:
\begin{align*}
\Bigl((1-t)^2f_\lambda(t)\Bigr)^{[1]}(t_i,t_j)
&=-{t_i+t_j\over\lambda}+{2\lambda-1\over\lambda^2}
+{(\lambda^{-1}-1)^2\over(1-\lambda t_i)(1-\lambda t_j)}, \\
\Bigl((t+1)^2f_\lambda(t)\Bigr)^{[1]}(t_i,t_j)
&=-{t_i+t_j\over\lambda}-{2\lambda+1\over\lambda^2}
+{(\lambda^{-1}+1)^2\over(1-\lambda t_i)(1-\lambda t_j)}, \\
\Bigl((1-t^2)f_\lambda(t)\Bigr)^{[1]}(t_i,t_j)
&={t_i+t_j\over\lambda}+{1\over\lambda^2}
-{\lambda^{-2}-1\over(1-\lambda t_i)(1-\lambda t_j)}.
\end{align*}
Moreover, if $f$ is operator monotone on $(-1,1)$, then the boundary conditions as
$t\nearrow1$ or $t\searrow-1$ in $(\beta)$--$(\delta)$ are shown by L\"owner's integral
representation. Since an operator monotone functions on $(a,b)$ is transformed into that
on $(-1,1)$ by an affine function, the argument here supplies another (direct) proof of
the implications from $(\alpha)$ to $(\beta)$--$(\gamma)$ in Corollary \ref{C-4.2}. So the
converse implications of these are of actual substance in the corollary.
\end{remark}

\end{document}